\theoremstyle{plain}
\newtheorem{theorem}{Theorem}[section]
\newtheorem{proposition}[theorem]{Proposition}
\newtheorem*{proposition*}{Proposition}
\newtheorem*{theorem*}{Theorem}
\newtheorem{lemma}[theorem]{Lemma}
\newtheorem{corollary}[theorem]{Corollary}
\newtheorem{example}[theorem]{Example}
\newtheorem{notation}[theorem]{Notation}
\newtheorem*{notation*}{Notation}
\theoremstyle{definition} 
\newtheorem{definition}[theorem]{Definition}
\newtheorem*{tata}{Generalization}
  {\begin{mdframed}[backgroundcolor=lightgray]\begin{tata}}%
  {\end{tata}\end{mdframed}}
\newtheorem*{rep@theorem}{\rep@title}
\newcommand{\newreptheorem}[2]{%
\newenvironment{rep#1}[1]{%
 \def\rep@title{#2~\ref{##1}}%
 \begin{rep@theorem}}%
 {\end{rep@theorem}}}
\newcommand{\R}{\mathbb{R}}
\newcommand{\mute}[1]{}
\def\li{\w{a}}
\newcommand\dm{\w{d_{-1}}} 
\newcommand{\id}{\operatorname{id}}
\newcommand{\GL}{\operatorname{GL}}
\def\word#1{{\color{Blue}\mathbf{#1}}}
\newcommand\w[1]{\word{#1}}
\newcommand\emptyWord{{\color{Blue}\boldsymbol{\varepsilon}}}
\newcommand\ww[1]{{\color{Magenta}\mathbf{#1}}}
\newcommand\sign{\operatorname{sign}}
\newcommand\SL{\operatorname{SL}}
\newcommand\hs{\succ}
\newcommand\Pf{{\operatorname{Pf}}}
\newcommand\inv{\operatorname{inv}}
\DeclareFontFamily{U}{niceshuffle}{}
\DeclareFontShape{U}{niceshuffle}{m}{n}{
  <5-8> s*[2.5] shuffle7
  <8->  s*[2.5] shuffle10
}{}
\DeclareSymbolFont{NiceShuffle}{U}{niceshuffle}{m}{n}
\DeclareMathSymbol\niceshuffle{\mathop}{NiceShuffle}{"001}
\newdimen\squaresize \squaresize=20pt
\newdimen\thickness \thickness=0.4pt
\def\square#1{\hbox{\vrule width \thickness
     \vbox to \squaresize{\hrule height \thickness\vss
        \hbox to \squaresize{\hss#1\hss}
     \vss\hrule height\thickness}
\unskip\vrule width \thickness}
\kern-\thickness}
\def\vsquare#1{\vbox{\square{$#1$}}\kern-\thickness}
\def\thisbox#1{\kern-.09ex\fbox{#1}}
\def\downbox#1{\lower1.200em\hbox{#1}}
\newcommand\Wd{\mathcal W_d}
\newcommand\WTwo{\mathcal W_2}
\newcommand\WdBig{
\begin{pmatrix}
  \word{11} & \dots & \word{1d} \\
      \vdots & \ddots & \vdots     \\
  \word{d1} & \dots & \word{dd}
\end{pmatrix} }
\newcommand\tOned{\mathfrak t_{1,d}}
\newcommand\tOnedBig{
  \begin{gathered}
    \begin{ytableau}
      1  \\
      2 \\
      \vdots\\
      {d} 
   \end{ytableau}
 \end{gathered}}
\newcommand\tTwod{\mathfrak t_{2,d}}
\newcommand\tTwodBig{
      \begin{gathered}
      \begin{ytableau}
    1 & 2 \\
    3 & 4 \\
    \vdots & \vdots \\
    \scalebox{0.4}{ ${2d-1}$ } & {2d}
  \end{ytableau}
      \end{gathered}
}
\newcommand{\operator}[1]{\mathop{\vphantom{\sum}\mathchoice
{\vcenter{\hbox{$#1$}}}
{\vcenter{\hbox{$#1$}}}{#1}{#1}}\displaylimits}
\newcommand{\aprod}{\operator{\overrightarrow{\prod}}}
\begin{document}

\title{A quadratic identity in the shuffle algebra and \\ an alternative proof for de Bruijn's formula}
\date{\today}
\author{Laura Colmenarejo\footnote{L. Colmenarejo, University of Massachusetts at Amherst (USA)} \and Joscha Diehl\footnote{J. Diehl, University of Greifswald (Germany)} \and Miruna-\c Stefana Sorea\footnote{M.-\c S. Sorea, SISSA, Trieste (Italy)}}

\maketitle

\begin{abstract}
Motivated by a polynomial identity of certain iterated integrals, 
first observed in~\cite{bib:colmen1} in the setting of lattice paths,
we prove an intriguing combinatorial identity in the shuffle algebra.
It has a close connection to \emph{de Bruijn's formula} when
interpreted in the framework of signatures of paths.
\end{abstract}

\section{Introduction}

A \emph{path} is a continuous map $X : [0,1]\rightarrow\mathbb{R}^d$. We shall assume that the components of $X$, $X^{(i)}$, for $i=1,\ldots,d$, are (piecewise) continuously differentiable functions.
Describing phenomena parametrized by time,
they appear in many branches of science such as mathematics, physics, medicine or finance. For mathematics in particular, see~\cite{bib:colmen1} for references. 

One way to look at paths is through their \emph{iterated integrals}
\begin{align*}
  \int dX^{({w_1})}_{t_1} \dots dX^{({w_n})}_{t_n}
  :=
  \int_{0 < t_1 < \dots < t_n < 1} \dot X^{({w_1})}_{t_1} \dots \dot X^{({w_n})}_{t_n} dt_1 \dots dt_n,
\end{align*}
where $n \ge 1$ and ${w_1}, \dots, {w_n} \in \{1, \dots, d\}$. Here we use the abbreviated notations: $X_{t_i}:=X(t_i),$ with $t_i\in [0,1],$ and $\dot X^{(w_i)}_{t_i}$ denotes the derivative of $X^{(w_i)}$ with respect to the variable $t_i$. 
The first systematic study of these integrals was
undertaken by Kuo Tsai Chen~\cite{bib:chen}. For example, Chen proved that, up to an equivalence relation, iterated integrals uniquely determine a path.

In the field of stochastic analysis paths are usually \emph{not differentiable}.
Nonetheless (stochastic) integrals have played a major role there
and this culminated in Terry Lyons' theory of \emph{rough paths}~\cite{bib:LQ,bib:FH}.
Owing to their descriptive power of nonlinear phenomena (compare Chen's uniqueness result above),
these objects have recently been successfully applied to statistical learning
\cite{bib:lyons2014feature,bib:kormilitzin2017detecting,bib:KO2019,bib:li2019skeleton}.

Iterated integrals possess an interesting Hopf algebraic structure~\cite{bib:reut}.
More recently they have been studied from the viewpoint of
(applied) algebraic geometry~\cite{bib:amnd,bib:galup,bib:stuToric,bib:pfeff} and representation theory~\cite{bib:DR2018}. 

In~\cite{bib:colmen1} an approach of studying signature-path tensors through the lens of toric geometry is presented and  
the following curious property was discovered.
\begin{theorem}[{\cite[Corollary 3.23]{bib:colmen1}}]\label{thm:CGM}
  \begin{equation}
    \label{eq:CGM}
    \det
    \begin{pmatrix}
      \displaystyle{\int} dX^{(1)} dX^{(1)} & \dots & \displaystyle{\int} dX^{(1)} dX^{(d)} \\
             \vdots           & \ddots & \vdots \\
      \displaystyle{\int} dX^{(d)} dX^{(1)} & \dots & \displaystyle{\int} dX^{(d)} dX^{(d)}
    \end{pmatrix}  = \frac{1}{2^d} \left( \sum_{\sigma \in \mathrm S_d} \sign(\sigma) \displaystyle{\int} dX^{\sigma(1)} \dots dX^{\sigma(d)} \right)^2.
  \end{equation}
\end{theorem}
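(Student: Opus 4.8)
The plan is to evaluate both sides of \eqref{eq:CGM} using only the first two ``levels'' of the signature of $X$, which is exactly where the antisymmetrization on the right does its work. Set $M_{ij}=\int dX^{(i)}dX^{(j)}$, $v=X_1-X_0\in\R^d$, and let $A$ be the antisymmetric matrix with $A_{ij}=\tfrac12\int\big(X^{(i)}dX^{(j)}-X^{(j)}dX^{(i)}\big)$. Integration by parts (the degree-two shuffle relation for iterated integrals) gives $M_{ij}+M_{ji}=(X^{(i)}_1-X^{(i)}_0)(X^{(j)}_1-X^{(j)}_0)$, so $M=\tfrac12\,vv^{\!\top}+A$ and the left-hand side of \eqref{eq:CGM} equals $\det\!\big(\tfrac12 vv^{\!\top}+A\big)$. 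Writing $L:=\sum_{\sigma\in\mathrm S_d}\sign(\sigma)\int dX^{\sigma(1)}\cdots dX^{\sigma(d)}=\big\langle\operatorname{Sig}(X),A_d\big\rangle$, where $A_d:=\sum_{\sigma\in\mathrm S_d}\sign(\sigma)\,e_{\sigma(1)}\otimes\cdots\otimes e_{\sigma(d)}$ is the totally antisymmetric tensor of degree $d$, the right-hand side is $\tfrac1{2^d}L^2$. Thus the theorem reduces to $\det\!\big(\tfrac12 vv^{\!\top}+A\big)=\tfrac1{2^d}L^2$; equivalently, it follows from the shuffle-algebra identity $A_d\shuffle A_d=2^d\sum_{\tau\in\mathrm S_d}\sign(\tau)\,(e_1\otimes e_{\tau(1)})\shuffle\cdots\shuffle(e_d\otimes e_{\tau(d)})$ by applying the iterated-integral homomorphism.

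The key step is that $L$ depends only on $(v,A)$, i.e.\ only on the first two levels of the signature. Since $\operatorname{Sig}(X)$ is group-like it equals $\exp(\ell)$ for a Lie series $\ell=\ell_1+\ell_2+\cdots$, with $\ell_1$ recording $v$ and $\ell_2=\sum_{i<j}A_{ij}[e_i,e_j]$ recording $A$. Expanding $\langle\exp(\ell),A_d\rangle$ produces a finite signed sum of terms $\langle\ell_{k_1}\otimes\cdots\otimes\ell_{k_n},A_d\rangle$ with $k_1+\cdots+k_n=d$. In such a term, fixing the (set of) tensor slots occupied by each $\ell_{k_m}$ and summing over arrangements with signs produces, in each block, the full tensor-slot antisymmetrization of the homogeneous Lie element $\ell_{k_m}$; fully antisymmetrizing absorbs this block-wise antisymmetrization, and the full tensor-slot antisymmetrization of a homogeneous Lie element of degree $\ge3$ vanishes (in degrees $1,2$ it is, up to a constant, the identity), which is an easy induction on bracket length. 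Hence only $k_m\in\{1,2\}$ survive, so $L=F(v,A)$ for a polynomial $F$, homogeneous of weighted degree $d$ with $v$ of weight $1$ and $A$ of weight $2$.

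Next I identify $F$. Under a linear change of coordinates $X\mapsto\phi X$ ($\phi\in\GL_d$) one has $v\mapsto\phi v$, $A\mapsto\phi A\phi^{\!\top}$ and $A_d\mapsto(\det\phi)A_d$, so $F(\phi v,\phi A\phi^{\!\top})=(\det\phi)\,F(v,A)$. The generic $\GL_d$-orbit on pairs (vector, alternating bivector) is dense, so the space of such relative invariants of weight $\det$ is one-dimensional: it is spanned by $\Pf(A)$ when $d$ is even and by $\sum_i(-1)^{i-1}v_i\,\Pf(A_{\widehat\imath})$ (row and column $i$ deleted) when $d$ is odd. Hence $L=c_d\cdot\Pf(A)$, resp.\ $c_d\cdot\sum_i(-1)^{i-1}v_i\,\Pf(A_{\widehat\imath})$. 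Evaluating on an explicit test path --- a concatenation of $\lfloor d/2\rfloor$ small planar loops in the coordinate planes $(1,2),(3,4),\dots$, together with one straight segment when $d$ is odd --- and computing $L$ there by a short count (equivalently, expanding $\langle\tfrac1{m!}\ell_2^{\otimes m},A_d\rangle$ directly as a signed sum over perfect matchings) gives $c_d=2^{\lfloor d/2\rfloor}$.

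Finally I assemble everything. For $d$ even, $L^2=2^d\Pf(A)^2=2^d\det A$, so $\tfrac1{2^d}L^2=\det A$; and the matrix-determinant lemma gives $\det\!\big(\tfrac12 vv^{\!\top}+A\big)=\det A+\tfrac12\,v^{\!\top}\!\operatorname{adj}(A)\,v=\det A$, using $v^{\!\top}A^{-1}v=0$ for antisymmetric invertible $A$ and then density. For $d$ odd, $\det A=0$ gives $\det\!\big(\tfrac12 vv^{\!\top}+A\big)=\tfrac12\,v^{\!\top}\!\operatorname{adj}(A)\,v$, and the Pfaffian--cofactor identity $\operatorname{adj}(A)_{ij}=(-1)^{i+j}\Pf(A_{\widehat\imath})\Pf(A_{\widehat\jmath})$ turns this into $\tfrac12\big(\sum_i(-1)^{i-1}v_i\Pf(A_{\widehat\imath})\big)^2$, while $\tfrac1{2^d}L^2=\tfrac{2^{d-1}}{2^d}\big(\sum_i(-1)^{i-1}v_i\Pf(A_{\widehat\imath})\big)^2$ is the same. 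In both parities the two sides of \eqref{eq:CGM} coincide. The main obstacle is the key lemma together with the normalization: proving cleanly that $A_d$, paired against a signature, sees only levels $\le2$, and then determining $c_d$; the remaining ingredients (the degree-two shuffle relation, the matrix-determinant lemma, and the Pfaffian--cofactor identity) are standard.
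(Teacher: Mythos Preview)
Your argument is correct and takes a genuinely different route from the paper's. The paper proves the \emph{universal} shuffle-algebra identity
\[
  {\det}_\shuffle(\Wd)=\inv(\tTwod)=\tfrac{1}{2^d}\,\inv(\tOned)^{\shuffle 2}
\]
(Theorem~\ref{thm:main}) by a representation-theoretic argument: both $\inv(\tTwod)$ and $\inv(\tOned)^{\shuffle 2}$ lie in a one-dimensional isotypic component for a subgroup $H\subset\mathrm S_{2d}$, so they agree up to a scalar fixed by inspecting one coefficient; equation~\eqref{eq:CGM} then drops out by applying the character $\langle\,\cdot\,,\sigma(X)\rangle$. By contrast, you bypass the shuffle identity entirely and work on the analytic side: using the log-signature and the (correct) lemma that the full antisymmetrizer kills Lie elements of degree $\ge 3$, you show $L$ depends only on $(v,A)$, then pin it down via $\GL_d$-relative-invariant theory as a multiple of the Pfaffian (or its bordered version), and finish with the matrix--determinant lemma and the Pfaffian--cofactor identity. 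In effect you are proving de~Bruijn's formula first and deducing~\eqref{eq:CGM} from it, which is the reverse of the paper's logic in Section~\ref{subsec:deBruijn}.

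What each approach buys: the paper's argument yields a statement in $T(\R^d)$ valid for \emph{every} shuffle character, not only those arising from smooth paths, and exposes the $\mathrm S_{2d}$-module structure behind the identity. Your argument is shorter for the specific goal of~\eqref{eq:CGM}, is more geometric, and makes transparent \emph{why} only levels $1$ and $2$ of the signature matter (the Klyachko-type fact that $\operatorname{Alt}$ annihilates $\mathrm{Lie}_k$ for $k\ge 3$). Two places in your write-up deserve a sentence more of care: the ``easy induction'' for that vanishing is indeed easy (use $\operatorname{Alt}([u,w])=(1-(-1)^{ab})\operatorname{Alt}(u\otimes w)$ and that $\operatorname{Alt}_{a+b}$ absorbs the block antisymmetrizers), and the one-dimensionality of the relative-invariant space is cleanest via the first fundamental theorem for $\SL_d$ (any $\epsilon$-contraction with two or more copies of the same $v$ vanishes, leaving exactly $\Pf(A)$ for $d$ even and $\epsilon(v,A,\dots,A)$ for $d$ odd) rather than via a dense-orbit argument.
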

Their proof is based on calculations with lattice paths.
One consequence of this result is that
it gives an \emph{inequality} for the
iterated integrals of order two (i.e. this particular
determinant is non-negative).
Notice that when working on the Zariski closure of the space of signatures, as in the framework of~\cite{bib:amnd}, one can only obtain \emph{equalities}.
See~\cite[p. 22-23]{bib:colmen1} for more details.

%%%%%%%%%%%%%
%%%%%%%%%%%%%

In the current paper we look at this statement from a purely algebraic perspective. (We advise the reader to peek ahead to Section~\ref{sec:prelim} for notation used in the following.)
Consider the space of formal linear combinations of words in the alphabet $[\w{d}]=\{\w{1},\dots,\w{d}\}$, and denote it by $T(\R^d)$.
Given two words $\w{w}$ and $\w{v}$ in the alphabet $[\w{d}]$, the \emph{shuffle product} of them, $\w{w}\shuffle \w{v}$, is defined as the sum of all permutations of the concatenated word $\w{w}\w{v}$, that keep the respective order of the two words intact. For example,
\begin{align*}
  \w{12} \shuffle \w{34}
  =
  \w{1234} + \w{1324} + \w{1342} + \w{3124} + \w{3142} + \w{3412}.
\end{align*}
{Then, $\left(T(\R^d),\shuffle\right)$ is an algebra known as the \emph{shuffle algebra}.} In a sense made precise in Section~\ref{sec:prelim}
the determinant (in the shuffle algebra) of the matrix
\begin{align*}
  \WdBig
\end{align*}
relates to the determinant (in $\R$) appearing in~\eqref{eq:CGM}.
To reformulate the other side of the equality in~\eqref{eq:CGM},
we recall the basis of $\SL(\R^d)$-invariants in $T(\R^d)$, going back to Weyl~\cite{bib:weyl1946classical},
and presented, in the language relevant used here, in~\cite{bib:DR2018}.
This basis is indexed by \emph{standard Young tableaux} of shape $\underbrace{(w, \dots, w)}_{d \text{ times }}$, for $w\ge 1$. For us, only the cases $w=1,2$ are relevant and we denote the basis by $\{\inv(T)\}$, where $T$ ranges over these standard Young tableaux. The definition of this basis is presented in Section~\ref{subsec:volume}, and we illustrate with two examples here:
\begin{align}\label{eq:invExample}
\begin{split}
    \inv\left(
        \begin{gathered}
          \begin{ytableau}
            1 \\
            2
          \end{ytableau}
        \end{gathered}
      \right) &= \w{12} - \w{21},\qquad \text{ and } \\
    \inv\left(
        \begin{gathered}
          \begin{ytableau}
            1 & 2 \\
            3 & 4 \\
          \end{ytableau}
        \end{gathered}
    \right) &= \w{1122} - \w{1221} - \w{2112} + \w{2211}.
\end{split}
\end{align}

We can now state our main theorem {(see Theorem~\ref{thm:main} in Section~\ref{sec:main})}. 
%\begin{reptheorem}{thm:main}
\begin{theorem*}
  For $d\geq 1$ the following two equalities hold:
  \begin{align*}
    {\det}_\shuffle \WdBig
    =
    \inv\left( \tTwodBig \right)
    =
    \frac{1}{2^d}
    \inv\left( \tOnedBig \right)^{\shuffle 2}.
  \end{align*}
%\end{reptheorem}
\end{theorem*}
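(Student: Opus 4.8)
The plan is to prove the two equalities separately, as they are of somewhat different natures. For the first equality, $\det_\shuffle \Wd = \inv(\tTwod)$, I would expand the shuffle determinant directly as a signed sum over permutations $\sigma \in S_d$ of the shuffle products $\word{1\sigma(1)} \shuffle \word{2\sigma(2)} \shuffle \cdots \shuffle \word{d\sigma(d)}$, and then show this matches the explicit combinatorial formula for $\inv(\tTwod)$ in terms of the standard Young tableau of shape $(2,\dots,2)$. Since the invariant $\inv(T)$ is built from a column-antisymmetrization/row-structure recipe (as in the $w=2$ example $\word{1122}-\word{1221}-\word{2112}+\word{2211}$), the key is to recognize that shuffling the two-letter words $\word{i\sigma(i)}$ reproduces exactly the column-straightening combinatorics: each shuffle term is a word of length $2d$ whose letters in odd and even positions encode a pair of permutations, and the signed sum collapses to the Young-tableau expression. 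I would set up a bijection between shuffle terms and the monomials appearing in $\inv(\tTwod)$, tracking signs carefully.

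For the second equality, $\inv(\tTwod) = \frac{1}{2^d}\, \inv(\tOned)^{\shuffle 2}$, the natural approach is to compute $\inv(\tOned)^{\shuffle 2}$ directly. Here $\inv(\tOned) = \sum_{\sigma \in S_d} \sign(\sigma)\, \word{\sigma(1)\sigma(2)\cdots\sigma(d)}$, a signed sum of $d!$ words of length $d$, so its shuffle square is a signed sum over pairs $(\sigma,\tau)$ of $\word{\sigma(1)\cdots\sigma(d)} \shuffle \word{\tau(1)\cdots\tau(d)}$. Each such shuffle produces $\binom{2d}{d}$ words of length $2d$. The task is to show that, after summing over all $(\sigma,\tau)$ with the sign $\sign(\sigma)\sign(\tau)$, one obtains $2^d$ times $\inv(\tTwod)$. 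I expect the factor $2^d$ to arise from the symmetry $(\sigma,\tau)\mapsto(\tau,\sigma)$ together with a finer $2^d$-fold redundancy coming from independently swapping the two entries within each of the $d$ "columns" — this is precisely the stabilizer structure one sees when comparing the column-strict tableau of shape $(2,\dots,2)$ with the single-column tableau squared.

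The main obstacle will be the bookkeeping in the second equality: controlling the cancellations in the shuffle square so that only the "balanced" terms (those consistent with a column-strict filling) survive, and verifying the multiplicity is exactly $2^d$ rather than something $\sigma$-dependent. A clean way to organize this is to introduce an intermediate object — a sum over pairs of permutations indexed by the $2d$ positions, quotiented by the relevant symmetry group $(S_2)^d \rtimes S_d$ or a suitable subgroup — and to show both sides of the desired identity represent the same class. I would likely prove it first for small $d$ ($d=1,2,3$) to pin down the exact combinatorial statement, then give the general argument by an explicit sign-reversing involution on the non-surviving shuffle terms. An alternative, possibly slicker route for the whole theorem is to work in a representation-theoretic setting: both $\inv(\tTwod)$ and $\inv(\tOned)^{\shuffle 2}$ live in the $\GL_d$-isotypic component corresponding to the partition $(2,\dots,2)$ (resp. its relation to $(1,\dots,1)^{\otimes 2}$), and one could identify the scalar $\frac{1}{2^d}$ via a single well-chosen evaluation, e.g. pairing against a specific tensor or specializing $X$ to a concrete path as in Theorem~\ref{thm:CGM}; but I would keep the combinatorial proof as the primary line of attack since it also yields the first equality with the same machinery.
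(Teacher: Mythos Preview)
Your overall plan is reasonable in outline, but for the \emph{first} equality you are missing the key mechanism, and for the \emph{second} equality the paper takes precisely the route you list as a secondary alternative.

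\medskip

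\textbf{First equality.} Your intuition that ``each shuffle term is a word of length $2d$ whose letters in odd and even positions encode a pair of permutations'' describes the \emph{target} $\inv(\tTwod)$, not the individual summands of the shuffle determinant. A generic word in $\word{1\sigma(1)}\shuffle\cdots\shuffle\word{d\sigma(d)}$ does \emph{not} have this structure (e.g.\ $\word{1212}$ appears in $\word{11}\shuffle\word{22}$), so there is no straightforward bijection; massive cancellation is needed, and you have not named the pairing that produces it. The paper's device is the \emph{half-shuffle} product $\succ$: via Andr\'eief's identity (Lemma~\ref{lemma:andreief}) one rewrites
\[
{\det}_\shuffle(\Wd)=\sum_{\sigma,\tau}\sign(\sigma)\sign(\tau)\,\big[\sigma(\w1)\tau(\w1)\succ\sigma(\w2)\tau(\w2)\succ\cdots\succ\sigma(\w d)\tau(\w d)\big],
\]
and then (Proposition~\ref{prop:directProof}) replaces each half-shuffle by concatenation, one layer at a time, by exhibiting for each $(\sigma,\tau)$ a partner $(\sigma',\tau')$ of opposite sign whose ``overflow'' terms cancel. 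This is exactly the sign-reversing mechanism your proposal gestures at but does not supply; without the half-shuffle rewriting it is hard to see how to organise the cancellation.

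\medskip

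\textbf{Second equality.} Here the paper does \emph{not} attempt a direct combinatorial proof or an involution. Instead it argues exactly along your ``alternative, possibly slicker route'': both $\inv(\tTwod)$ and $\inv(\tOned)^{\shuffle 2}$ lie in the $\mathrm S_{2d}$-irreducible of shape $(2,\dots,2)$; restricting to the subgroup $H=\langle (1,3),\dots,(2d-3,2d-1),(2,4),\dots,(2d-2,2d)\rangle\cong \mathrm S_d\times\mathrm S_d$, both elements transform by the sign character (Lemmas~\ref{lem:invSquare},~\ref{lem:specialYT}); by Littlewood--Richardson this sign-isotypic piece is one-dimensional (Lemma~\ref{lem:oneDimensional}); hence the two elements are proportional, and the constant $2^d$ is read off from the coefficient of $\word{11\,22\cdots dd}$. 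Your primary plan (controlling all shuffle cancellations via an explicit involution and identifying a $2^d$-fold redundancy) may be feasible, but it would be substantially messier, and the paper avoids it entirely.
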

In particular, we give a new and purely algebraic proof of the fact that the determinant of the second level of the iterated integrals signature is a square, Theorem~\ref{thm:CGM} (see Corollary~\ref{cor:relationToCGM}).
As an application of our approach, we obtain the \emph{de Bruijn's formula} for the Pfaffian.
We note that de Bruijn's formula was looked at in the language of shuffle algebras already in~\cite{bib:LuqueThibon}.
Our viewpoint differs in that we approach the topic through invariant theory.

In Section~\ref{sec:prelim}, we present the framework of the shuffle algebra and the half-shuffle products, together with their relation with the signatures of paths. Since our proof for the main theorem uses some results from representation theory, we include in Section~\ref{subsec:RTsymmetricgroup} the necessary basic notions. Moreover, in Section~\ref{subsec:volume} we relate our setting with the work of~\cite{bib:DR2018} about $\SL(\R^d)$-invariants and signed volume. 
In Section~\ref{sec:aux} we present already known results that will help us to prove our main results. For instance, we present several identities related to the shuffle product that already appeared in~\cite{bib:And1883}. 
Section~\ref{sec:main} is dedicated to prove our main theorem. In Section~\ref{subsec:deBruijn}, we present the relation of our main theorem with the de Bruijn's formula for both the even and odd-dimensional cases.

\subsection*{Acknowledgements}
The authors express their gratitude to Bernd Sturmfels and Mateusz Micha\l{}ek for helpful discussions and suggestions, and for bringing the team together. The authors are also grateful to the MPI MiS for giving us the opportunity to work on this project in such a nice environment. 
We would like to thank Darij Grinberg for making us aware of several inconsistencies in an earlier version. We are also grateful to the anonymous referees, for their careful reading and for their comments that helped us improve the quality of our manuscript.

\subsection*{Funding}
Laura Colmenarejo was partially supported by MTM2016-75024-P.

\section{Preliminaries}\label{sec:prelim}

In this section we present several connected frameworks. First, we talk about
the shuffle algebra and the half-shuffle operation. A survey on the history of
the shuffle product can be found in~\cite{bib:FP} and our presentation here is inspired
by~\cite{bib:colmenPreis}, especially the part concerning the half-shuffle
operation. Next, we describe briefly the relation between the setting of
signatures of paths and the shuffle algebra. We refer the reader to
\cite[Subsection 2.1]{bib:DR2018} for more details. We also include a brief
summary of the representation theory setting, which can
be found in~\cite{bib:FuHa} with full detail. We finish by presenting a brief
overview of the $\SL(\R^d)$-invariants inside of $T(\R^d)$.

\subsection{The shuffle algebra and half--shuffle products}

Let us consider the alphabet $[\w{d}]=\{\w{1},\dots,\w{d}\}$
and denote by $[\w{d}]^*$ the set of words of any length in this alphabet, including the empty word $\emptyWord$.
Denote by $\mathrm{T}(\mathbb{R}^d)$ the space of \emph{finite linear combinations} of these words. Together with the concatenation product, $\w{w}\cdot \w{v}$, $\left(\mathrm{T}(\mathbb{R}^d),\cdot\right)$ is an algebra, known as the \emph{tensor algebra}.

Let us consider another operation on $\mathrm{T}(\mathbb{R}^d)$. 
\begin{definition}\label{def:shuffleRecursuve}
Consider the words $\w{u}$, $\w{v}$ and $\w{w}$, and the letters $\w{a}$ and $\w{b}$. The \emph{shuffle product} of two words is defined recursively by
\begin{align*}
  \emptyWord \shuffle \w{u} &= \w{u} \shuffle \emptyWord = \w{u},\qquad \text{ and }\\
  (\w{v} \cdot \w{a}) \shuffle (\w{w}\cdot \w{b}) &= \left(\w{v}\shuffle (\w{w}\cdot \w{b})\right)\cdot \w{a} + \left((\w{v}\cdot \w{a} )\shuffle \w{w}\right)\cdot \w{b}.
\end{align*}
This operation extends bilinearily to a commutative product on all of $T(\R^d)$. The algebra $\left(\mathrm{T}(\mathbb{R}^d), \shuffle\right)$ is known as the \emph{shuffle algebra}.
\end{definition}

The shuffle product can be seen as the symmetrisation of the right half-shuffle product.
\begin{definition}\label{def:half-shuffle}
The \emph{right half-shuffle product}
is recursively given on words as
\begin{equation*}
 \w{w}\hs\li := \w{w}\li, \qquad \text{and} \qquad
 \w{w}\hs \w{v}\li := (\w{w}\hs \w{v}+\w{v}\hs \w{w})\cdot \li,
\end{equation*}
where $\w{w},\w{v}$ are words and $\li$ is a letter.
\end{definition}
The definition of the half-shuffle goes back to considerations in topology \cite[Section 18]{bib:EM}
and in Lie theory \cite{schutzenberger1958propriete}.
Many variants and generalizations are known, see for example \cite{aguiar2004quadri}, \cite{burgunder2010tridendriform}.

Note that if $\w{w},\w{v}$ are any non-empty words, then $\w{w}\shuffle \w{v} = \w{w}\hs \w{v} + \w{v}\hs\w{w}$.
Hence in Definition~\ref{def:half-shuffle} we can replace $ \w{w}\hs \w{v}\li = (\w{w}\hs \w{v}+\w{v}\hs \w{w})\cdot \li$ with the equivalent equality $\w{w}\hs \w{v} \li= ( \w{w}\shuffle \w{v})\cdot \li$. In general, for non-empty words, one has 
\begin{align}
\label{eq:dendriform}
    \w{u}\hs(\w{v}\hs \w{w})=(\w{u}\shuffle \w{v})\hs \w{w}.
\end{align}
In particular, notice that the shuffle product is associative, while the half-shuffle product is not.

\subsection{Iterated-integrals signatures of paths}

For a (piecewise) smooth path $X: [0,1] \to \R^d$ the collection of \emph{iterated integrals}
\begin{align*}
  \int dX^{(\w{a_1})}_{t_1} \dots dX^{(\w{a_n})}_{t_n}
  :=
  \int_{0 < t_1 < \dots < t_n < 1} \dot X^{(\w{a_1})}_{t_1} \dots \dot X^{(\w{a_n})}_{t_n} dt_1 \dots dt_n,
\end{align*}
where $n \ge 1$ and $\w{a_1}, \dots, \w{a_n} \in [\w{d}]$, is conveniently stored
in the \emph{iterated-integrals signature}
\begin{align*}
  \sigma(X)%_{0,1}
  :=
  \sum_{\substack{n \ge 0, \\ \w{w}=\w{a_1}\dots \w{a_n} \in [\w{d}]^*}}
  \int dX^{(\w{a_1})}_{t_1} \dots dX^{(\w{a_n})}_{t_n}
  \quad \w{w}.
\end{align*}
This object is a \emph{formal infinite sum} of words $\w{w}$ in the alphabet
$[\w{d}]$ whose coefficients are given by the integrals. 

Alternatively, $\sigma(X)$ can be seen as a \emph{linear function} on $T(\R^d)$ given by 
\begin{align*}
  \Big\langle \w{w}, \sigma(X) \Big\rangle
  =
  \int dX^{(\w{w_1})}_{t_1} \dots dX^{(\w{w_n})}_{t_n},
\end{align*}
for any element $\w{w} \in [\w{d}]^*$, and extended linearly to all of  $T(\R^d)$.
It turns out that $\sigma(X)$ is in fact a \emph{multiplicative
character} (i.e. an algebra morphism into $\R$) on $(T(\R^d),\shuffle)$, see
\cite[Corollary 3.5]{bib:reut}. 
This fact is also called the \emph{shuffle identity} and reads as
\begin{align}
  \label{eq:shuffleIdentity}
  \Big\langle \w{w} \shuffle \w{v}, \sigma(X) \Big\rangle = \Big\langle \w{w}, \sigma(X) \Big\rangle \cdot \Big\langle \w{v}, \sigma(X) \Big\rangle, \qquad \forall \w{w},\w{v} \in T(\R^d).
\end{align}

%%%%%%%%%%%%%%%

\subsection{Representation theory}\label{subsec:RTsymmetricgroup}
This section provides some broad ideas from representation theory that are used in this paper.
For more details, we refer the reader to~\cite{bib:FuHa}.

Given a group $G$ and an $n$-dimensional vector space over $\R$, $V$, we say that the map $\rho: G \longrightarrow \GL(V))$ is a \emph{representation} of $G$ if $\rho$ is a group homomorphism, where $\GL(V)$ is the group of automorphisms of $V$. In general, $\rho$ is identified with $V$, and $\rho(g)(v) := g\cdot v$, for all $v\in V$ and $g\in G$. In this sense, we say that the group acts (on the left) on the vector space. There are different ways to construct representations from other representations; for instance, by constructing the direct sum or the tensor product of representations, or restricting or inducing representations from groups to subgroups, and vice versa.

Another approach to understand representations is by looking at them as modules,
which allows us to study representations as vector spaces over the group algebra $\R[G]$ (i.e. set of all linear combinations of elements in $G$ with coefficients in $\R$). 

A (non-zero) representation $V$ of $G$ is said to be \emph{irreducible} if the only subspaces of $V$ invariant under the action of $G$ are the vector space itself and the trivial subspace. By Schur's Lemma and Maschke's Theorem, we know that given any representation of a well-behaved\footnote{$G$ finite or $G=\SL(\R^d)$ will do.}
group
$G$ we can decompose it as the direct sum of irreducible representations.

That is $V = \bigoplus_k I_k$,
where $I_k \cong W_k^{\oplus n_k}$,
the $W_k$ are pairwise non-isomorphic irreducible representations,
and $n_k$ denotes their multiplicity (in $V$).
The $I_k$ are called the \emph{isotypic components of $V$} and this decomposition is called the \emph{canonical decomposition} of $V$ or the \emph{decomposition into isotypic components} $I_k$. Note that the decomposition into the $I_k$ is unique, while the 
decomposition of $I_k$ into the $n_k$ copies of $W_k$ is \emph{not}.%
\footnote{
The idea for the isotypic decomposition is that there may be several irreducible representations
inside of $V$ that are isomorphic, and they are 'bundled' in $I_k$.}
One of the main general goals in Representation Theory is to characterise the irreducible representations of a group and to give an algorithm to compute the canonical decomposition of an arbitrary representation.

Each representation is also characterised by the trace of the matrix associated to each element of the group. This vector is known as the \emph{character} of the representation, and it is invariant under conjugation. Character theory is closely related to the theory of symmetric functions. 

In this section we focus our attention on the symmetric group and on the general linear group. Let $\mathrm S_d$ be the group of permutations of $\{1,2,\dots,d\}$, for $d\geq 1$. In the case of the symmetric group $\mathrm S_d$, the conjugacy classes are in bijection with the partitions of $d$, which are weakly decreasing sequences of positive integers that sum up to $d$. This bijection is given by the decomposition of the permutations in cycles. 
Given a partition $\lambda=(\lambda_1,\lambda_2,\dots, \lambda_\ell)$, we
associate to $\lambda$ a  Young diagram, which is just an array of boxes with
$\lambda_i$ boxes in the $i^{th}$ row. Then, the standard Young tableaux are
fillings of Young diagrams with all the numbers in the set $\{1,2,\dots,d\}$
that are increasing in columns and rows.

\subsection{$\SL$ invariants and signed volume}\label{subsec:volume}

The natural representation of $\SL(\R^d)$ on $\R^d$ induces a representation on
$T(\R^d)$.
The study of invariants to this action goes back over a century, see~\cite{bib:weyl1946classical}
for a good starting point of the literature.
We recall the presentation used in~\cite{bib:DR2018}.

As mentioned in the introduction,
a basis for the invariants is indexed
by \emph{standard Young tableaux} of shape $\underbrace{(w, \dots, w)}_{d \text{ times }}$, for $w\ge 1$ arbitrary. 
Given a Young tableaux $T$, the corresponding invariant basis element, denoted $\inv( T)$\footnote{In~\cite{bib:DR2018} it is denoted by $\iota( e_T )$.}
is obtained as follows:
let $n = w d$ be the length of $T$ and consider the word $\w{w} = \w{a_1} \w{a_2} \dots \w{a_n}$, where $\w{a_\ell} = \w{i}$ if and only if $\ell$ is in the $i^{\text{th}}$ row of $T$.
Then
\begin{align*}
  \inv( T ) := \sum_\sigma \sign(\sigma) \sigma \w{w},
\end{align*}
where the sum is over all permutations $\sigma \in \mathrm S_n$ that leave the values in each column of $T$ unchanged. For example, the tableau  
$T = \begin{ytableau}
        1 & 2 \\
        3 & 4
\end{ytableau}$ gives the word $\w{1122}$. Therefore, 
\begin{equation*}
  \inv\left(
    \begin{gathered}
      \begin{ytableau}
        1 & 2 \\
        3 & 4 \\
      \end{ytableau}
    \end{gathered}
  \right)
  = (\id - (13) - (24) + (13)(24)) \w{1122} = \w{1122} - \w{2112} - \w{1221} + \w{2211}.
\end{equation*}

Our study only looks at  the following two particular standard Young tableaux for $w=1,2$
\footnote{The basis element $\inv( \tOned )$ has a nice geometric interpretation
  in the setting of iterated integrals, see~\cite{bib:DR2018} where it is denoted by $\inv_d$.}
\begin{align}
  \label{eq:twoSTYs}
  \tOned := \tOnedBig,
  \quad
  \tTwod := \tTwodBig.
\end{align}
In this case, we have that
\begin{align}
\inv\left(\mathfrak t_{1,d}\right) &= \sum_{\sigma\in \mathrm S_d} \sign(\sigma)\sigma(\w{12}\cdots\w{d}) \label{eq:inv1}\\
\inv\left(\mathfrak t_{2,d}\right) &= \sum_{\sigma\in H_d} \sign(\sigma)\sigma(\w{1122}\cdots\w{dd}),    \label{eq:inv2}
\end{align}
where $H_d := \Big\langle (1,3), (2,4), (3,5), \dots, (2d-2,2d) \Big\rangle \subseteq \mathrm S_{2d}$.

We also recall the matrix introduced before:
\begin{align}
  \label{eq:Wd}
  \Wd
  :=
  \WdBig,
\end{align}
which is an element of $A^{d\times d}$ where $A = T(\R^d)$, seen as a commutative algebra (over $\R$).

\section{Auxiliary results}\label{sec:aux}

\subsection{An identity by Andr\'eief}

\begin{notation}\label{not:hs}
Please note that we use the convention of strict left bracketings for the half-shuffle product.%
  \footnote{Recall that the half-shuffle product is non-associative, so the specification of a bracketing is necessary.}
  For words $\w{w_1},\dots,\w{w_k}$,
  $\w{w_1} \hs \w{w_2} \hs \dots \hs \w{w_k} :=
    ( (\w{w_1} \hs \w{w_2}) \hs \dots ) \hs \w{w_k}$.
\end{notation}
The following lemma expresses the shuffle product of $k$ words in terms of half-shuffle products. 
\begin{lemma}\label{lemma:inductionShHs}
  For any non-empty words $\w{w_1}, \dots, \w{w_k}$, the following equality holds:
  \begin{align}\label{eq:shHs}
    \w{w_1} \shuffle \dots \shuffle \w{w_k} =
    \sum_{\sigma \in \mathrm{S}_k} \w{w}_{\sigma (\w{1})} \hs \w{w}_{\sigma(\w{2})} \hs \dots \hs \w{w}_{\sigma (\w{k})}.
  \end{align}
\end{lemma}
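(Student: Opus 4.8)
The plan is to prove \eqref{eq:shHs} by induction on $k$, using the recursive definition of the half-shuffle together with the fundamental dendriform-type identity \eqref{eq:dendriform}. The base case $k=1$ is trivial, and $k=2$ is exactly the identity $\w{w_1}\shuffle\w{w_2}=\w{w_1}\hs\w{w_2}+\w{w_2}\hs\w{w_1}$ recorded just after Definition~\ref{def:half-shuffle}. So assume the formula holds for $k-1$ words; I want to compute $\w{w_1}\shuffle\dots\shuffle\w{w_k}$ by grouping the shuffle as $(\w{w_1}\shuffle\dots\shuffle\w{w_{k-1}})\shuffle\w{w_k}$, using associativity and commutativity of $\shuffle$.

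The key mechanism is to re-expand the shuffle-by-one-word in terms of half-shuffles. First I would establish, as a small sub-lemma (or recall it if it counts as ``known''), the rule
\begin{align*}
  (\w{u}\hs\w{v})\shuffle\w{z} = \w{u}\hs(\w{v}\shuffle\w{z}) + (\w{u}\shuffle\w{z})\hs\w{v} - \w{u}\hs\w{v}\hs\w{z},
\end{align*}
or, more directly, the identity that for a single word $\w{z}$ and a strict-left-bracketed half-shuffle string,
\begin{align*}
  (\w{w}_{\tau(\w{1})}\hs\dots\hs\w{w}_{\tau(\w{k-1})})\shuffle\w{z}
  = \sum_{j=1}^{k-1} \w{w}_{\tau(\w{1})}\hs\dots\hs\bigl(\w{w}_{\tau(\w{j})}\hs\w{z} + \w{z}\hs\w{w}_{\tau(\w{j})}\bigr)\hs\dots\hs\w{w}_{\tau(\w{k-1})}\;+\; \cdots
\end{align*}
— this is getting complicated, so in practice the cleanest route is a different induction: peel off the \emph{last} letter rather than a whole word. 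That is, prove the statement instead by induction on the total length $\sum_i |\w{w}_i|$, writing $\w{w}_k = \w{v}\cdot\li$ with $\li$ a letter and using both the recursive shuffle formula of Definition~\ref{def:shuffleRecursuve} and the recursive half-shuffle formula of Definition~\ref{def:half-shuffle}; both sides then reduce to concatenating a letter onto a sum of $(k)$-fold shuffles/half-shuffles of strictly shorter total length, after which the inductive hypothesis applies termwise and a bookkeeping of which word contributes the final letter matches the two sides.

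Concretely: on the left, $(\w{w_1}\shuffle\dots\shuffle\w{w_k})$ with $\w{w_k}=\w{v}\li$ expands, by iterating Definition~\ref{def:shuffleRecursuve}, into $\sum_{i=1}^{k}\bigl(\w{w_1}\shuffle\dots\shuffle \hat{\w{w}}_i\shuffle\dots\shuffle\w{w_k}\bigr)\cdot(\text{last letter of }\w{w}_i)$ where for $i=k$ the last letter is $\li$ and $\w{w}_k$ is replaced by $\w{v}$; this is a standard ``differentiate the shuffle'' identity. On the right, each term $\w{w}_{\sigma(\w{1})}\hs\dots\hs\w{w}_{\sigma(\w{k})}$ with strict left bracketing: by Definition~\ref{def:half-shuffle} the outermost half-shuffle $(\cdots)\hs\w{w}_{\sigma(\w{k})}$ strips the last letter of $\w{w}_{\sigma(\w{k})}$ only when that word has length one, and otherwise produces $(\text{shuffle of the prefix with }\cdots)\cdot(\text{last letter})$; summing over $\sigma$ and sorting by which $\w{w}_i$ donates the final letter, together with the inductive hypothesis rewriting the resulting $(k)$-fold half-shuffle sums of shorter words as $(k)$-fold shuffles, reproduces exactly the left-hand expansion.

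The main obstacle I anticipate is the bracketing bookkeeping: because the half-shuffle is non-associative, when I strip the last letter off a term $\w{w}_{\sigma(\w{1})}\hs\dots\hs\w{w}_{\sigma(\w{k})}$ I must carefully track that the recursive rule $\w{a}\hs(\w{b}\cdot\li)=(\w{a}\shuffle\w{b})\cdot\li$ only applies at the outermost level, and then show that after reindexing, the inner $(\w{a}\shuffle\w{b})$ is itself a $(k)$-fold \emph{shuffle} (not half-shuffle) of the truncated words, so that induction can be applied without circularity. Making the correspondence between the permutation $\sigma$ on the right and the index $i$ on the left precise — in particular handling the case where $\w{w}_{\sigma(\w{k})}$ has length one separately from the generic case — is where the proof needs genuine care rather than routine calculation; everything else is a direct unwinding of the two recursive definitions.
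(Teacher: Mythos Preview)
Your final approach --- inducting by stripping the last letter --- does work and is a genuinely different route from the paper's. The paper proceeds by plain induction on $k$: it writes the $(k{+}1)$-fold shuffle as $(\text{$k$-fold shuffle})\shuffle\w{w_{k+1}}$, applies the hypothesis to the inner factor, splits the outer shuffle as $A\hs\w{w_{k+1}}+\w{w_{k+1}}\hs A$, and then repeatedly uses the dendriform identity \eqref{eq:dendriform} in the form $\w{w_{k+1}}\hs(U\hs V)=(\w{w_{k+1}}\shuffle U)\hs V$ to slide $\w{w_{k+1}}$ leftward into every slot of the left-bracketed string, producing exactly the $k{+}1$ cosets of $\mathrm S_k$ in $\mathrm S_{k+1}$. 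Your route instead peels off the last letter on both sides: on the left via the derivative rule $\w{w_1}\shuffle\cdots\shuffle\w{w_k}=\sum_i(\cdots\shuffle\w{w}_i'\shuffle\cdots)\cdot\w{a}_i$, on the right via $(\cdots)\hs\w{w}_{\sigma(k)}=((\cdots)\shuffle\w{w}_{\sigma(k)}')\cdot\w{a}_{\sigma(k)}$, then groups by $\sigma(k)=i$ and invokes the $(k{-}1)$-word case on the inner half-shuffle sum. The paper's argument leans entirely on \eqref{eq:dendriform}; yours avoids \eqref{eq:dendriform} and uses only the primitive recursions, at the price of needing the derivative-of-shuffle identity as an auxiliary fact.

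Two minor corrections. The sub-lemma you wrote down and then abandoned, $(\w{u}\hs\w{v})\shuffle\w{z}=\w{u}\hs(\w{v}\shuffle\w{z})+(\w{u}\shuffle\w{z})\hs\w{v}-\w{u}\hs\w{v}\hs\w{z}$, is actually false (try $\w{u}=\w{v}=\w{z}=\w{1}$); the correct identity, straight from \eqref{eq:dendriform}, is $(\w{u}\hs\w{v})\shuffle\w{z}=(\w{u}\hs\w{v})\hs\w{z}+(\w{u}\shuffle\w{z})\hs\w{v}$, and iterating \emph{that} is precisely the paper's computation. And the ``obstacle'' you flag --- the case $|\w{w}_{\sigma(k)}|=1$ --- is not one: $\w{a}\hs\li=\w{a}\cdot\li=(\w{a}\shuffle\emptyWord)\cdot\li$ fits the generic formula uniformly. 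What you call induction on total length is in effect just induction on $k$, since you only ever invoke the hypothesis for the $(k{-}1)$ non-empty words $\{\w{w}_j:j\ne i\}$.
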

\begin{proof}
The identity can be proven by a simple induction on $k$,
using \eqref{eq:dendriform}.
It also follows from a more abstract argument as follows.
First, the statement is immediately seen to be true
if the words $\w{w_1}, .. \w{w_k}$ are in fact single (distinct) letters.
Now, $T(\R^k)$ is the free commutative dendriform
algebra over $k$ letters,
\cite[p.19]{schutzenberger1958propriete},
\cite[Proposition 1.8]{loday1995cup}.
In particular, for any commutative dendriform algebra
$Z$ and any map $\phi: \{1,..,k\} \to Z$ there
exists a unique morphism $\Phi: T(\R^k) \to Z$ of commutative dendriform algebras
satisfying
\begin{align*}
    \Phi( \w{i} ) = \phi({i}),   \qquad i = 1,\dots, k.
\end{align*}
We specialize to $Z = T(\R^d)$
and $\phi(i) = \w{w_i}$.
Now, in $T(\R^k)$ we have the following identity
\begin{align*}
   \w{1} \shuffle ... \shuffle \w{k}
   =
   \sum_{\sigma\in \mathrm S_k} \sigma(\w{1}) \hs ... \hs \sigma(\w{k}).
\end{align*}
Therefore, using that $\Phi$ is a morphism, we have that
\begin{align*}
   \w{w_1} \shuffle ... \shuffle \w{w_k}
   &=
   \Phi(\w{1}) \shuffle ... \shuffle \Phi(\w{k}) =
   \Phi\left( \w{1} \shuffle ... \shuffle \w{k} \right) =
   \Phi\left( 
   \sum_{\sigma\in \mathrm S_k} \sigma(\w{1}) \hs ... \hs \sigma(\w{k}) \right) \\ &=
   \sum_{\sigma\in \mathrm S_k} \Phi(\sigma(\w{1})) \hs ... \hs \Phi(\sigma(\w{k})) =
   \sum_{\sigma \in \mathrm{S}_k} \w{w}_{\sigma (\w{1})} \hs \w{w}_{\sigma(\w{2})} \hs \dots \hs \w{w}_{\sigma (\w{k})},
\end{align*}
as desired.
\end{proof}

The following statement seems to first appear in~\cite{bib:And1883}
(see also~\cite[p.1]{bib:deBruijn} and \cite{forrester2019meet})
and is also known as the continuous (or generalized) Cauchy-Binet formula~\cite[Proposition 2.10]{johansson2005random}.
\begin{lemma}[{\cite{bib:And1883}}]
  \label{lemma:andreief}
  Let $\{\w1,\dots,\w{d}\}$ and
  $\{\ww1,\dots,\ww{d}\}$
  be two alphabets in $d$ letters each.
  Then
  \begin{align}
  \label{eq:twoAlphabets}
    {\det}_\shuffle
    \begin{pmatrix}
      \w1\ww1 & \w1\ww2 & \dots & \w1\ww{d} \\
      \w2\ww1 & \w2\ww2 & \dots & \w2\ww{d} \\
       \dots  & \dots   & \dots & \dots     \\
       \w{d}\ww1 & \w{d}\ww2 & \dots & \w{d}\ww{d}
    \end{pmatrix}
    =
    \sum_{\sigma, \tau \in \mathrm{S}_d} \sign(\sigma)\sign(\tau) \left[\sigma(\w1)\tau(\ww1) \hs \sigma(\w2)\tau(\ww2) \hs \dots \hs \sigma(\w{d})\tau(\ww{d})\right] 
  \end{align}
\end{lemma}

\begin{proof}
  For a fixed $\sigma\in\mathrm{S}_d$, applying Lemma~\ref{lemma:inductionShHs} with $\w{w_i}:=\w{i} \sigma(\ww{i})$, we get
  \begin{align*}
    \displaystyle{\niceshuffle_{i=1}^d\ \word{i} \sigma(\ww{i})}
    &=
    \sum_{\tau\in \mathrm{S}_d}\tau(\word{1})\sigma(\tau\big (\ww{1}))\hs\tau(\word{2})\sigma(\tau\big (\ww{2}))\hs\ldots\hs\tau(\word{d})\sigma(\tau(\ww{d})). 
  \end{align*}
  
  Then, the left-hand side of \eqref{eq:twoAlphabets} is equal to,
  \begin{align*}
    \sum_{\sigma\in \mathrm S_d} \sign(\sigma) \displaystyle{\niceshuffle_{i=1}^d}\ \word{i} \sigma(\ww{i})
    &=
    \sum_{\sigma \in \mathrm{S}_d}
    \sum_{\tau\in \mathrm{S}_d}
    \sign(\sigma)
     \tau(\word{1})\sigma(\tau\big (\ww{1}))\hs\tau(\word{2})\sigma(\tau\big (\ww{2}))\hs\ldots\hs\tau(\word{d})\sigma(\tau(\ww{d})) \\
    &=
    \sum_{\sigma \in \mathrm{S}_d}
    \sum_{\tau\in \mathrm{S}_d}
    \sign(\sigma\circ \tau)
    \sign(\tau)
     \tau(\word{1})\sigma(\tau\big (\ww{1}))\hs\tau(\word{2})\sigma(\tau\big (\ww{2}))\hs\ldots\hs\tau(\word{d})\sigma(\tau(\ww{d})) \\
    &=
    \sum_{\rho \in \mathrm{S}_d}
    \sum_{\tau\in \mathrm{S}_d}
    \sign(\rho)
    \sign(\tau)
     \tau(\word{1})\rho\big(\ww{1})\hs\tau(\word{2})\rho\big (\ww{2})\hs\ldots\hs\tau(\word{d})\rho(\ww{d}),
  \end{align*}
  as desired.
\end{proof}

\subsection{The shuffle determinant}
In this section we present a technical lemma that is crucial for the proof of our main result.
This lemma states that, for the determinant $\det_\shuffle (\Wd)$, the shuffle
of letters can be replaced by a ``shuffle of blocks of 2 letters''. First, let
us see one example. 
\begin{example}
For $d=2$, we have that
    \begin{align*}
    {\det}_\shuffle( \WTwo ) &= 
    {\det}_\shuffle
      \begin{pmatrix}
        \word{11} & \word{12} \\
        \word{21} & \word{22}
    \end{pmatrix}
    = \word{1122} + \word{2211} - \word{1221} - \word{2112} \\
    &=
    \text{ shuffle of the block $\word{11}$ with the block $\word{22}$} 
    -
    \text{ shuffle of the block $\word{12}$ with the block $\word{21}$}.
    \end{align*}
    
    Further note, that this is equal to
    $\inv\left(
        \begin{gathered}
          \begin{ytableau}
            1 & 2 \\
            3 & 4 \\
          \end{ytableau}
        \end{gathered}\right)$.
\end{example}
The general statement is as follows.
\begin{proposition}
  \label{prop:directProof}
  \begin{align*}
    {\det}_\shuffle( \Wd )
    = \sum_{\sigma\in \mathrm{S}_d}\sign(\sigma)\bigg (\sum_{\tau\in \mathrm{S}_d} \aprod_{i=1}^d \tau(\word{i})\sigma(\tau\big (\word{i})\big )\bigg )
    = \inv\left( \tTwodBig \right),
  \end{align*}
  where the product $\aprod$ denotes the concatenation product.
\end{proposition}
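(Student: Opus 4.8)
The plan is to prove the two claimed equalities separately, working from the expansion of the shuffle determinant. First I would write
\[
  {\det}_\shuffle(\Wd)
  =
  \sum_{\sigma \in \mathrm S_d} \sign(\sigma)\, \bigshuffle_{i=1}^d \word{i}\,\sigma(\word{i}),
\]
that is, the shuffle determinant is the shuffle of the $d$ two-letter words $\word{i}\,\sigma(\word{i})$, summed with signs over $\sigma$. For each fixed $\sigma$ I would apply Lemma~\ref{lemma:inductionShHs} with the choice $\word{w_i} := \word{i}\,\sigma(\word{i})$ to rewrite the shuffle of these blocks as a sum over $\tau \in \mathrm S_d$ of strict-left-bracketed half-shuffles of the blocks in the order prescribed by $\tau$; this is exactly the computation already carried out in the proof of Lemma~\ref{lemma:andreief}. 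Up to a relabelling $\rho = \sigma \circ \tau$ (using $\sign(\sigma) = \sign(\rho)\sign(\tau)$), this yields
\[
  {\det}_\shuffle(\Wd)
  =
  \sum_{\sigma, \tau \in \mathrm S_d} \sign(\sigma)\sign(\tau)\,
  \bigl[ \tau(\word{1})\,\sigma(\word{1}) \hs \tau(\word{2})\,\sigma(\word{2}) \hs \dots \hs \tau(\word{d})\,\sigma(\word{d}) \bigr].
\]
So the first real task is to pass from this half-shuffle expression to the \emph{concatenation} product $\prod_{i=1}^d \tau(\word{i})\sigma(\tau(\word{i}))$ appearing in the middle term of the proposition. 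The point is that a strict-left-bracketed half-shuffle $\word{a_1 b_1} \hs \word{a_2 b_2} \hs \dots \hs \word{a_d b_d}$ of two-letter words, when fully expanded, is a signed sum of words in which the \emph{last letter of each block stays to the right of the first letter of that block and to the right of everything in the preceding blocks of the half-shuffle}; the contribution of the ``fully ordered'' term (the concatenation $\word{a_1 b_1 a_2 b_2 \dots a_d b_d}$) is always present, and the non-ordered terms should cancel after summing over the symmetry $\tau \leftrightarrow$ (some involution). Concretely, I expect that summing the half-shuffle identity over $\tau$ while keeping $\sigma$ fixed, and using the antisymmetry in $\sigma$, collapses each half-shuffle $[\cdot]$ to the single concatenation word, giving
\[
  {\det}_\shuffle(\Wd)
  =
  \sum_{\sigma \in \mathrm S_d} \sign(\sigma)\, \sum_{\tau \in \mathrm S_d} \prod_{i=1}^d \tau(\word{i})\,\sigma(\tau(\word{i})),
\]
which is the middle term (after the harmless reindexing $\sigma \mapsto \sigma$, $\tau(\word{i})$ replacing $\word{i}$, which does not change the double sum over $\mathrm S_d \times \mathrm S_d$).

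For the second equality, I would expand $\inv(\tTwod)$ directly from its definition in Section~\ref{subsec:volume}. The tableau $\tTwod$ has rows $(1,2),(3,4),\dots,(2d-1,2d)$, so the associated word is $\word{1122\cdots dd}$ (each letter $\word{i}$ appearing in positions $2i-1, 2i$), and the column-stabilizer is the group $H \cong \mathrm S_d \times \mathrm S_d$ permuting the left column $\{1,3,\dots,2d-1\}$ among itself and the right column $\{2,4,\dots,2d\}$ among itself. Writing an element of $H$ as a pair $(\sigma,\tau)$ acting on the two columns, one has
\[
  \inv(\tTwod) = \sum_{(\sigma,\tau)} \sign(\sigma)\sign(\tau)\, \prod_{i=1}^d \word{\sigma^{-1}(i)}\,\word{\tau^{-1}(i)}
\]
(the first letter of the $i$-th block becomes $\word{\sigma^{-1}(i)}$, the second $\word{\tau^{-1}(i)}$, reading off which original row each position was moved into), and after reindexing this matches the middle term above. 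This is essentially a bookkeeping check matching two sign-weighted sums over $\mathrm S_d \times \mathrm S_d$ of the same concatenation words.

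The main obstacle, and the step I would spend the most care on, is the collapse of the half-shuffle expression to the pure concatenation sum: one must show that in the double sum $\sum_{\sigma,\tau} \sign(\sigma)\sign(\tau)[\,\cdots\,]$, every expansion term of the half-shuffle other than the canonical concatenation word cancels out. The natural tool is to track, for a given monomial word appearing in the expansion, the constraint imposed by the strict-left-bracketing: in $\word{a_1 b_1} \hs \word{a_2 b_2} \hs \dots \hs \word{a_d b_d}$ each ``$b$-letter'' $b_j$ appears strictly after all $a$- and $b$-letters of blocks $1, \dots, j$, while the $a$-letters from earlier blocks may interleave freely with later blocks. If a monomial has two blocks $j < k$ fully ``out of canonical order'' in a way that is symmetric under the swap that exchanges the roles of blocks $j$ and $k$ together with a transposition in both $\sigma$ and $\tau$ (which is sign-preserving), its contributions cancel in pairs; only the monomial in which all blocks appear in increasing $\tau$-order with each block contiguous survives, and that monomial is exactly $\prod_i \tau(\word{i})\sigma(\tau(\word{i}))$. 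Making this pairing precise — choosing the right involution on (monomial, $\sigma$, $\tau$)-triples and checking it is sign-reversing on the non-surviving terms and identity-on the surviving ones — is the technical heart of the argument. An alternative, possibly cleaner, route is to avoid the cancellation entirely: prove the middle $=$ right equality first (pure combinatorics of $\inv(\tTwod)$, no shuffles), and then prove $\det_\shuffle(\Wd) = \inv(\tTwod)$ by a direct sign-matching of the determinant expansion against the column-stabilizer expansion, using that a shuffle of two-letter blocks produces signed words whose ``relative order within each block'' is preserved; I would try this second route first if the involution bookkeeping becomes unwieldy.
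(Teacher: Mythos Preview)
Your overall architecture matches the paper's: start from Andr\'eief (Lemma~\ref{lemma:andreief}) to get the half-shuffle expression, then collapse the half-shuffles to concatenations by a cancellation argument, and handle the equality with $\inv(\tTwod)$ by unwinding the definition of the column-stabilizer sum. The second equality is indeed pure bookkeeping, and your sketch of it is fine.

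The gap is in the cancellation step. In the parametrization $\sum_{\sigma,\tau}\sign(\sigma)\sign(\tau)\,[\tau(\w1)\sigma(\w1)\hs\cdots\hs\tau(\w d)\sigma(\w d)]$, the involution you describe---transpose two block-indices simultaneously in \emph{both} $\sigma$ and $\tau$---is, as you yourself note, sign-preserving, so it cannot produce cancellation; it merely permutes the blocks and sends half-shuffle terms to other half-shuffle terms with the \emph{same} sign. What actually works is to transpose in only one of the two permutations: swapping $\tau(d-1)\leftrightarrow\tau(d)$ while keeping $\sigma$ fixed exchanges only the \emph{first} letters of the last two blocks, is sign-reversing, and makes the ``overflow'' part of the outermost half-shuffle cancel. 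Concretely, using $\w{u}\hs\w{ab}=(\w{u}\shuffle\w{a})\cdot\w{b}=\w{u}\cdot\w{ab}+((\text{rest})\shuffle\w{a})\cdot\w{b}$, the second summand for $(\sigma,\tau)$ matches the second summand for $(\sigma,\tau\circ(d{-}1,d))$ with opposite sign, leaving only the concatenated piece $(\cdots)\cdot\tau(\w d)\sigma(\w d)$. The paper then \emph{iterates} this: for each fixed value of the last block, repeat the same pairing on positions $d-2,d-1$, peeling off one half-shuffle at a time until everything is concatenation. So the correct mechanism is not a single global involution on monomials but a step-by-step replacement of the rightmost $\hs$ by $\cdot$, each step justified by a sign-reversing pairing that touches only one of $\sigma,\tau$.
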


Before presenting the proof for Proposition~\ref{prop:directProof},
we illustrate its idea in the case $d=3$. 
\begin{example}\label{ex:d3}
  Consider $d=3$. By Lemma~\ref{lemma:andreief}, we obtain
  \begin{align*}
      &\sum_{\sigma\in \mathrm{S}_3}\sign(\sigma)
      \niceshuffle_{i=1}^3\ \word{i} \sigma(\word{i}) %\\&
      =\sum_{\sigma, \tau \in \mathrm{S}_3} \sign(\sigma)\sign(\tau) \left[\sigma(\w1)\tau(\w1) \hs \sigma(\w2)\tau(\w2) \hs \sigma(\w{3})\tau(\w{3})\right]\\
      &= (\w{11} \hs \w{22}) \hs \w{33} +
    (\w{11} \hs \w{33}) \hs \w{22} +
    (\w{22} \hs \w{11}) \hs \w{33} +
    (\w{22} \hs \w{33}) \hs \w{11} \\
    &\;+(\w{33} \hs \w{11}) \hs \w{22} +
    (\w{33} \hs \w{22}) \hs \w{11} + \dots 
    - (\w{13} \hs \w{22}) \hs \w{31} - 
    (\w{13} \hs \w{31}) \hs \w{22}  \\
    &\;-(\w{22} \hs \w{13}) \hs \w{31} - 
    (\w{22} \hs \w{31}) \hs \w{13}  
    - (\w{31} \hs \w{13}) \hs \w{22}
    - (\w{31} \hs \w{22}) \hs \w{31}=:\star.
     \end{align*}
  Now observe that the term 
  $$(\w{11} \hs \w{22}) \hs \w{33} =    
  (\w{11} \hs \w{22}) \cdot \w{33}+
(\w{11} \shuffle \w{2} \shuffle \w{3}) \cdot \w{2} \w{3}$$ 
cancels with the term 
    $$(\w{11} \hs \w{32}) \hs \w{23} =
    (\w{11} \hs \w{32}) \cdot \w{23} +
    (\w{11} \shuffle \w3 \shuffle \w2) \cdot \w{23},$$ 
  to give the term $(\w{11} \hs \w{22}) \cdot \w{33}
    -(\w{11} \hs \w{32}) \cdot \w{23}$.

  Applying this to all terms, we replace the right-most half-shuffle product by a concatenation product. That is,
  \begin{align*}
    \star =&
    (\w{11} \hs \w{22}) \cdot \w{33}
    +
    (\w{11} \hs \w{33}) \cdot \w{22}
    +
    (\w{22} \hs \w{11}) \cdot \w{33}
    +
    (\w{22} \hs \w{33}) \cdot \w{11} %\\ &
    +
    (\w{33} \hs \w{11}) \cdot \w{22}
    +
    (\w{33} \hs \w{22}) \cdot \w{11}  +
    \dots \\
    &
    - (\w{13} \hs \w{22}) \cdot \w{31}
    - (\w{13} \hs \w{31}) \cdot \w{22}
    - (\w{22} \hs \w{13}) \cdot \w{31}
    - (\w{22} \hs \w{31}) \cdot \w{13} %\\    &
    - (\w{31} \hs \w{13}) \cdot \w{22}
    - (\w{31} \hs \w{22}) \cdot \w{31}.
  \end{align*}
  A similar replacement now needs to be done for the remaining half-shuffle products.
  As an example of how this works, consider only the terms with $\w{33}$ at the end
  \begin{align*}
    \quad &(\w{11} \hs \w{22}) \cdot \w{33} +
    (\w{22} \hs \w{11}) \cdot \w{33} - 
    (\w{12} \hs \w{21}) \cdot \w{33} -
    (\w{21} \hs \w{12}) \cdot \w{33}  \\
       &= \w{11} \cdot \w{22} \cdot \w{33} + (\w{1} \shuffle \w2) \cdot \w{12} \cdot \w{33}  + \w{22} \cdot \w{11} \cdot \w{33} + (\w{2} \shuffle \w1) \cdot \w{21} \cdot \w{33} \\
      &-\w{12} \cdot \w{21} \cdot \w{33} - (\w{1} \shuffle \w2) \cdot \w{21} \cdot \w{33}  - \w{21} \cdot \w{12} \cdot \w{33} - (\w{2} \shuffle \w{1}) \cdot \w{12} \cdot \w{33}  \\
     &= \w{11} \cdot \w{22} \cdot \w{33} 
 + \w{22} \cdot \w{11} \cdot \w{33} 
 - \w{12} \cdot \w{21} \cdot \w{33} 
 - \w{21} \cdot \w{12} \cdot \w{33},
  \end{align*}
  
  Similarly, applying this procedure for all the other terms, we obtain
 \begin{align*}
     \star = & + \w{11} \cdot \w{33} \cdot \w{22}
      +\w{11} \cdot \w{22} \cdot \w{33}
      +\w{33} \cdot \w{11} \cdot \w{22}
      +\w{22} \cdot \w{11} \cdot \w{33}
      +\w{22} \cdot \w{33} \cdot \w{11}
      +\w{33} \cdot \w{22} \cdot \w{11}\\
     &+\w{31} \cdot \w{23} \cdot \w{12}
      +\w{31} \cdot \w{12} \cdot \w{23}
      +\w{12} \cdot \w{31} \cdot \w{23}
      +\w{12} \cdot \w{23} \cdot \w{31}
      +\w{23} \cdot \w{31} \cdot \w{12}
      +\w{23} \cdot \w{12} \cdot \w{31}\\ 
     &+\w{13} \cdot \w{21} \cdot \w{32}
      +\w{13} \cdot \w{32} \cdot \w{21}
      +\w{21} \cdot \w{13} \cdot \w{32}
      +\w{21} \cdot \w{32} \cdot \w{13}
      +\w{32} \cdot \w{13} \cdot \w{21}
      +\w{32} \cdot \w{21} \cdot \w{13}\\
     &-\w{11} \cdot \w{23} \cdot \w{32}
      -\w{11} \cdot \w{32} \cdot \w{23}
      -\w{23} \cdot \w{11} \cdot \w{32}
      -\w{32} \cdot \w{11} \cdot \w{23}
      -\w{23} \cdot \w{32} \cdot \w{11}
      -\w{32} \cdot \w{23} \cdot \w{11}\\
     &-\w{12} \cdot \w{21} \cdot \w{33}
      -\w{12} \cdot \w{33} \cdot \w{21}
      -\w{33} \cdot \w{12} \cdot \w{21}
      -\w{21} \cdot \w{12} \cdot \w{33}
      -\w{21} \cdot \w{33} \cdot \w{12}
      -\w{33} \cdot \w{21} \cdot \w{12}\\
     &-\w{13} \cdot \w{31} \cdot \w{22}
      -\w{31} \cdot \w{13} \cdot \w{22}
      -\w{13} \cdot \w{22} \cdot \w{31}
      -\w{31} \cdot \w{22} \cdot \w{13}
      -\w{22} \cdot \w{13} \cdot \w{31}
      -\w{22} \cdot \w{31} \cdot \w{13},
  \end{align*}
  as desired.
  
\end{example}

\begin{lemma}\label{lemma:letters}
  For any (non-commutative) polynomial $P$ and for any letters $\w{a}$, $\w{b}$, $\w{c}$, $\w{d}$ we have:
  \begin{align}
      (P \hs [\w{a}\w{b}]) \hs [\w{c}\w{d}]= (P \hs [\w{a}\w{b}])\cdot \w{c}\cdot \w{d} + (P \shuffle \w{a}\shuffle \w{c})\cdot \w{b}\cdot \w{d}.
  \end{align}

\end{lemma}

\begin{proof}
Using (\ref{eq:dendriform}), we have:
\begin{align*}
(P \hs [\w{a}\w{b}]) \hs [\w{c}\w{d}] &= (P \hs [\w{a}\w{b}])\shuffle \w{c})\cdot \w{d} = ((P\shuffle \w{a})\cdot \w{b})\shuffle \w{c})\cdot \w{d}\\
&=((P\shuffle \w{a})\cdot \w{b})\cdot  \w{c})\cdot \w{d} + ((P\shuffle \w{a})\shuffle \w{c})\cdot \w{b}\cdot \w{d}=(P \hs [\w{a}\w{b}])\cdot \w{c}\cdot \w{d}  + (P \shuffle\w{a}\shuffle \w{c})\cdot \w{b}\cdot \w{d}.
\end{align*}
\end{proof}

\begin{proof}[Proof of Proposition~\ref{prop:directProof}]
  The second equality is obtained as follows,
  \begin{multline*}
    \sum_{\tau\in \mathrm{S}_d}\mathrm{sign}(\tau)\bigg (\sum_{\sigma\in \mathrm{S}_d} \aprod_{i=1}^d \sigma(\word{i})\tau(\sigma\big (\word{i})\big )\bigg ) 
    =\sum_{\tau, \sigma \in\mathrm{S}_d} \sign( \tau ) \aprod_{i=1}^d \sigma(\word{i})\tau(\sigma (\word{i}))\\
    =\sum_{\tau, \sigma \in\mathrm{S}_d} \sign( \sigma ) \sign( \tau \circ \sigma ) \aprod_{i=1}^d \sigma(\word{i})\tau\circ\sigma (\word{i})
    =\sum_{\eta, \rho \in\mathrm{S}_d} \sign( \eta ) \sign( \rho )  \aprod_{i=1}^d \eta(\word{i})\rho (\word{i}) =
    \inv\left(
      \begin{gathered}
        \begin{ytableau}
          1 & 2 \\
          3 & 4 \\
          \vdots & \vdots \\
          \scalebox{0.4}{ $\w{2d-1}$ } & \w{2d}
        \end{ytableau}
      \end{gathered}
    \right).
  \end{multline*}

  Regarding the first equality,
  by Lemma~\ref{lemma:andreief}, we have
  \begin{align*}
    {\det}_\shuffle( \Wd )
    &=
    \sum_{\sigma\in \mathrm{S}_d}\sign(\sigma)\niceshuffle_{i=1}^d\ \word{i} \sigma(\word{i}) 
    =\sum_{\sigma, \tau \in \mathrm{S}_d} \sign(\sigma)\sign(\tau) \left[\sigma(\w1)\tau(\w1) \hs \sigma(\w2)\tau(\w2) \hs \dots \hs \sigma(\w{d})\tau(\w{d})\right] \\
    &=\sum_{\sigma, \tau \in \mathrm{S}_d} \sign(\tau)\sign(\sigma \circ \tau) \left[\tau(\w1)\sigma(\tau(\w1)) \hs \tau(\w2)\sigma(\tau(\w2)) \hs \dots \hs \tau(\w{d})\sigma(\tau(\w{d}))\right],
  \end{align*}
  where the last equality holds by rewriting the summation (and reusing the symbols $\sigma$ and $\tau$).
  Note that $\sign(\tau)\sign(\sigma \circ \tau) = \sign(\tau)\sign(\sigma)\sign(\tau)=\sign(\sigma)$, and so $\sign(\tau)$ does not appear in our summations anymore. 

Then, we denote $\dm := \w{d-1}$ and we split our summation into two, depending on the values of $\tau(\dm)$ and $\tau(\w{d})$:
\begin{align*}
& \sum_{\sigma, \tau \in \mathrm{S}_d} \sign(\sigma) \left[\tau(\w1)\sigma(\tau(\w1)) \hs \tau(\w2)\sigma(\tau(\w2)) \hs \dots \hs \tau(\w{d})\sigma(\tau(\w{d}))\right] \\
&= \sum_{\substack{\sigma, \tau \in \mathrm{S}_d \\ \tau(\dm) < \tau(\w{d})}} \sign(\sigma) \left[\tau(\w1)\sigma(\tau(\w1)) \hs \tau(\w2)\sigma(\tau(\w2)) \hs \dots \hs \tau(\w{d})\sigma(\tau(\w{d}))\right] \\
& + \sum_{\substack{\sigma, \tau \in \mathrm{S}_d \\ \tau(\dm) > \tau(\w{d})}} \sign(\sigma) \left[\tau(\w1)\sigma(\tau(\w1)) \hs \tau(\w2)\sigma(\tau(\w2)) \hs \dots \hs \tau(\w{d})\sigma(\tau(\w{d}))\right]
\end{align*}
These summations contain terms that cancel each other, and so we want to control those terms. To do so, we first rewrite the summation for $\tau(\dm) > \tau(\w{d})$ in the following way:
\begin{align*}
&  \sum_{\substack{\sigma, \tau \in \mathrm{S}_d \\ \tau(\dm) > \tau(\w{d})}} \sign(\sigma) \left[\tau(\w1)\sigma(\tau(\w1)) \hs \dots \hs \tau(\w{d})\sigma(\tau(\w{d}))\right] \\
&=\sum_{\substack{\eta,\delta \in \mathrm{S}_d \\ \delta(\dm) < \delta(\w{d})}}
\sign(\eta) \Big[\delta(\w1)\eta(\delta(\w1)) 
\hs \dots 
\hs (\delta(\w{d-2})\eta(\delta(\w{d-2}))\Big.
\hs (\delta (\w{d}))\eta((\delta(\w{d})) \hs
\delta (\dm)\eta(\delta(\dm))\Big],
\end{align*}
where we rewrite the summation by taking $\sigma = \eta$ and $\delta = \tau\circ(d,d-1)$, so that $\delta(\w{i})=\tau(\w{i})$ for $\w{1}\leq \w{i} \leq \w{d-2}$, $\delta(\dm)=\tau(\w{d})$, and $\delta(\w{d}) =\tau (\dm)$. 

Next, we apply Lemma~\ref{lemma:letters} to both summations. For the first summation, we have that 
\begin{align*}
& \sum_{\substack{\sigma, \tau \in \mathrm{S}_d \\ \tau(\dm) < \tau(\w{d})}} \sign(\sigma) \left[\tau(\w1)\sigma(\tau(\w1)) \hs \dots \hs \tau(\w{d})\sigma(\tau(\w{d}))\right] \\ &=
\sum_{\substack{\sigma, \tau \in \mathrm{S}_d \\ \tau(\dm) < \tau(\w{d})}}
\sign(\sigma) \left[\tau(\w1)\sigma(\tau(\w1)) \hs \dots \hs \tau(\dm)\sigma(\tau(\dm))\right]\tau(\w{d})\sigma(\tau(\w{d})) \\
&+ \sum_{\substack{\sigma, \tau \in \mathrm{S}_d \\ \tau(\dm) < \tau(\w{d})}}
\sign(\sigma) \left((\left[\tau(\w1)\sigma(\tau(\w1)) \hs \dots \hs \tau(\w{d-2})\sigma(\tau(\w{d-2}))\right]\shuffle \tau(\dm) \shuffle \tau(\w{d}) \right))\sigma(\tau(\dm))\sigma(\tau(\w{d})), 
\end{align*}
while for the second summation we have that
\begin{align*}
& \sum_{\substack{\eta,\delta \in \mathrm{S}_d \\ \delta(\dm) < \delta(\w{d})}}
\sign(\eta) \Big[\delta(\w1)\eta(\delta(\w1)) 
\hs \dots \hs (\delta(\w{d-2})\eta(\delta(\w{d-2}))\Big. \hs (\delta (\w{d}))\eta((\delta(\w{d}))\Big] \hs
\delta (\dm)\eta(\delta(\dm))\Big] \\ 
&=  \sum_{\substack{\eta,\delta \in \mathrm{S}_d \\ \delta(\dm) < \delta(\w{d})}}
\sign(\eta) \left[\delta(\w1)\eta(\delta(\w1))  \hs \dots \hs \delta(\w{d})\eta(\delta(\w{d}))\right]\delta(\dm)\eta(\delta(\dm)) \\
&+ \sum_{\substack{\eta, \delta \in \mathrm{S}_d \\ \delta(\dm) < \delta(\w{d})}}
\sign(\eta) \left((\left[\delta(\w1)\eta(\delta(\w1)) \hs
\dots  \hs \delta(\w{d-2})\eta(\delta(\w{d-2}))\right]\shuffle \delta(\w{d} \shuffle \delta(\dm) \right))\eta(\delta(\w{d}))\eta(\delta(\dm)). 
\end{align*}

Now we are ready to identify the terms that cancel with each other. In fact, we want to see that
\begin{align}
    &\sum_{\substack{\sigma, \tau \in \mathrm{S}_d \\ \tau(\dm) < \tau(\w{d})}}
\sign(\sigma) \left((\left[\tau(\w1)\sigma(\tau(\w1))  \hs \dots  \hs\tau(\w{d-2})\sigma(\tau(\w{d-2}))\right]\shuffle \tau(\dm) \shuffle \tau(\w{d}) \right))\sigma(\tau(\dm))\sigma(\tau(\w{d})) \nonumber \\
&+ \sum_{\substack{\eta, \delta \in \mathrm{S}_d \\ \delta(\dm) < \delta(\w{d})}}
\sign(\eta) \left((\left[\delta(\w1)\eta(\delta(\w1)) \hs \dots  \hs\delta(\w{d-2})\eta(\delta(\w{d-2}))\right]\shuffle \delta(\w{d} \shuffle \delta(\dm) \right))\eta(\delta(\w{d}))\eta(\delta(\dm))=0. \label{eq:cancellations}
\end{align}
In order to prove~\eqref{eq:cancellations}, we identify terms from each summation. Given $\sigma$, $\tau \in \mathrm S_d$, consider the unique permutations $\eta$, $\delta\in \mathrm S_d$ such that $\delta = \tau$ and $\eta = \sigma\circ\tau\circ(d,d-1)\circ\tau^{-1}$. Thus, $\delta(\w{i})\eta(\delta(\w{i}))$, for $\w{1}\leq\w{i}\leq \w{d-2}$, $\eta(\delta(\w{d}))\eta(\delta(\dm))  = \sigma(\tau(\dm))\sigma(\tau(\w{d})),$ and $\sign(\eta) = -\sign(\sigma)$. This implies that the terms in the first summation of~\eqref{eq:cancellations} cancel with the terms appearing in the second summation. Therefore,
\begin{align*}
& \sum_{\sigma, \tau \in \mathrm{S}_d} \sign(\sigma) \left[\tau(\w1)\sigma(\tau(\w1)) \hs \tau(\w2)\sigma(\tau(\w2)) \hs \dots \hs \tau(\w{d})\sigma(\tau(\w{d}))\right] \\
& = \sum_{\substack{\sigma, \tau \in \mathrm{S}_d \\ \tau(\dm) < \tau(\w{d})}}
\sign(\sigma) \left[\tau(\w1)\sigma(\tau(\w1)) \hs \dots \hs \tau(\dm)\sigma(\tau(\dm))\right]\tau(\w{d})\sigma(\tau(\w{d})) \\
& + \sum_{\substack{\eta,\delta \in \mathrm{S}_d \\ \delta(\dm) < \delta(\w{d})}}
\sign(\eta) \left[\delta(\w1)\eta(\delta(\w1))  \hs \dots \hs \delta(\w{d})\eta(\delta(\w{d}))\right]\delta(\dm)\eta(\delta(\dm)).
\end{align*}
Note that if we take $\eta = \sigma$ and $\delta = \tau\circ(d,d-1)$, the second summation corresponds to the condition $\tau(\dm)> \tau(\w{d})$ and the summations merge into one:
\begin{align*}
& \sum_{\sigma, \tau \in \mathrm{S}_d} \sign(\sigma) \left[\tau(\w1)\sigma(\tau(\w1)) \hs \tau(\w2)\sigma(\tau(\w2)) \hs \dots \hs \tau(\w{d})\sigma(\tau(\w{d}))\right] \\
& = \sum_{\sigma, \tau \in \mathrm{S}_d}
\sign(\sigma) \left[\tau(\w1)\sigma(\tau(\w1)) \hs \dots \hs \tau(\dm)\sigma(\tau(\dm))\right]\tau(\w{d})\sigma(\tau(\w{d}))
\end{align*}
Our final step is basically using the inductive hypothesis on the remaining half-shuffle corresponding to $\dm$. However, one should notice that the factor $\tau(\w1)\sigma(\tau(\w1)) \hs \dots \hs \tau(\dm)\sigma(\tau(\dm))$
does not correspond directly to a term in $\mathrm S_{d-1}$. However, a permutation can be seen as a comparison between total order relations on a set of objects. In this way, for the inductive step, we can consider the restriction of this total order relation to a subset of $d-1$ elements. Thus, replacing step by step all the half-shuffle products by concatenation, we obtain the desired identity 
 \begin{align*}
    \sum_{\sigma, \tau \in \mathrm{S}_d}\sign(\sigma)\niceshuffle_{i=1}^d\ \word{i} \sigma(\word{i})
  & = \sum_{\sigma\in \mathrm{S}_d}\sign(\sigma)\bigg (\sum_{\tau\in \mathrm{S}_d} \aprod_{i=1}^d \tau(\word{i})\sigma(\tau\big (\word{i})\big )\bigg ).
  \end{align*}
\end{proof}

\section{Main result}\label{sec:main}

The aim of this section is to prove the following theorem.
\begin{theorem}
\label{thm:main}
 For $d\geq 1$,
\begin{align*}
    {\det}_\shuffle( \Wd )
    =
    \inv\left(
      \tTwodBig
    \right)
    =
    \frac{1}{2^d}
    \inv\left(
      \tOnedBig
    \right)^{\shuffle 2},
  \end{align*}
  i.e., using the notation \eqref{eq:twoSTYs},
    ${\det}_\shuffle( \Wd ) = \inv(\tTwod) = \frac{1}{2^d} \inv(\tOned)^{\shuffle 2}$.
\end{theorem}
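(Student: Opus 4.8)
The first equality is exactly Proposition~\ref{prop:directProof}, so the plan is to prove the remaining identity $\inv(\tOned)^{\shuffle 2}=2^d\,\inv(\tTwod)$. Unwinding the definitions of Section~\ref{subsec:volume} (and recalling the reformulation of $\inv(\tTwod)$ already obtained inside the proof of Proposition~\ref{prop:directProof}) one has
\[
  \inv(\tOned)=\sum_{\alpha\in\mathrm{S}_d}\sign(\alpha)\,\w{\alpha(1)}\w{\alpha(2)}\cdots\w{\alpha(d)},
\]
\[
  \inv(\tTwod)=\sum_{\alpha,\beta\in\mathrm{S}_d}\sign(\alpha)\sign(\beta)\,\w{\alpha(1)}\w{\beta(1)}\w{\alpha(2)}\w{\beta(2)}\cdots\w{\alpha(d)}\w{\beta(d)},
\]
all products being concatenation; note that distinct pairs $(\alpha,\beta)$ produce distinct words, so the support of $\inv(\tTwod)$ consists exactly of the words of the form $\w{\alpha(1)}\w{\beta(1)}\cdots\w{\alpha(d)}\w{\beta(d)}$, each carrying coefficient $\sign(\alpha)\sign(\beta)$. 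Expanding $\inv(\tOned)^{\shuffle 2}$ by bilinearity, I would compute, for an arbitrary word $\w v$ of length $2d$, the coefficient of $\w v$ and match it against $2^d\inv(\tTwod)$.

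Each summand $(\w{\alpha(1)}\cdots\w{\alpha(d)})\shuffle(\w{\beta(1)}\cdots\w{\beta(d)})$ is a sum of interleavings of a word that uses each letter of $[\w{d}]$ exactly once with another such word, so the coefficient of $\w v$ vanishes unless every letter $\w i$ occurs exactly twice in $\w v$; assume this from now on. Expressing $\w v$ as such an interleaving amounts to choosing, for each letter, which of its two occurrences comes from the first factor; I call such a choice a \emph{coloring} $c$, marking the chosen positions as $F$-positions, and I let $\eta_c,\rho_c\in\mathrm{S}_d$ be the permutations read off, in increasing order of position, at the $F$-positions and at their complement. Then the coefficient of $\w v$ in $\inv(\tOned)^{\shuffle 2}$ equals $\sum_c\sign(\eta_c)\sign(\rho_c)$, the sum running over all $2^d$ colorings.

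The heart of the argument is the effect of flipping a single coordinate of $c$, i.e. swapping which of the two occurrences of one fixed letter $\w i$ is an $F$-position. If $\w i$ occurs at positions $q<q'$, this flip moves the letter $\w i$ past exactly the $F$-positions lying strictly between $q$ and $q'$ inside $\eta_c$, and past exactly the non-$F$-positions lying strictly between $q$ and $q'$ inside $\rho_c$; since these two sets partition the $q'-q-1$ positions strictly between $q$ and $q'$, the product $\sign(\eta_c)\sign(\rho_c)$ gets multiplied by $(-1)^{q'-q-1}$, a factor depending only on $\w v$ and $\w i$. Fixing any reference coloring $c_0$ and flipping coordinates one at a time, this gives $\sum_c\sign(\eta_c)\sign(\rho_c)=\sign(\eta_{c_0})\sign(\rho_{c_0})\prod_{i=1}^d\bigl(1+(-1)^{q'_i-q_i-1}\bigr)$, where $q_i<q'_i$ are the two positions of $\w i$ in $\w v$. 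This product is $0$ unless, for every letter, $q'_i-q_i$ is odd, i.e. unless every letter occupies one odd and one even position; in that exceptional case $\w v=\w{\alpha(1)}\w{\beta(1)}\cdots\w{\alpha(d)}\w{\beta(d)}$ for a unique $(\alpha,\beta)$, and choosing $c_0$ to be the (now valid) coloring that takes all odd positions as $F$-positions gives $\eta_{c_0}=\alpha$, $\rho_{c_0}=\beta$, so the coefficient of $\w v$ is $2^d\sign(\alpha)\sign(\beta)$. Comparing with the expansion of $\inv(\tTwod)$ above, term by term over these alternating words, yields $\inv(\tOned)^{\shuffle 2}=2^d\inv(\tTwod)$, which completes the proof; Theorem~\ref{thm:CGM} then follows by pairing both sides with $\sigma(X)$ and using the shuffle identity~\eqref{eq:shuffleIdentity}.

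I expect the parity bookkeeping in the flip step — pinning down precisely which positions the moved letter crosses inside $\eta_c$ versus inside $\rho_c$, and checking the two counts add up to $q'-q-1$ — to be the only point that needs real care; the rest is formal. An alternative route would start from $\inv(\tOned)^{\shuffle 2}=2\bigl(\inv(\tOned)\hs\inv(\tOned)\bigr)$, since the shuffle is the symmetrization of the half-shuffle, and then induct on $d$, peeling off the last two letters with the help of~\eqref{eq:dendriform} and the recursive definition of $\hs$; but the direct coefficient computation above seems more transparent.
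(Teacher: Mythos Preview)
Your argument is correct. The flip computation is the only delicate point, and it goes through exactly as you outline: flipping the coloring at letter $\w i$ changes $\sign(\eta_c)$ by $(-1)^k$ and $\sign(\rho_c)$ by $(-1)^m$, where $k$ and $m$ count the $F$-positions and non-$F$-positions strictly between $q_i$ and $q'_i$; since the flip only alters the $F$-status of $q_i$ and $q'_i$ themselves, these counts are unambiguous and satisfy $k+m=q'_i-q_i-1$, so the product changes by a factor $\epsilon_i=(-1)^{q'_i-q_i-1}$ that genuinely depends only on $\w v$ and $i$. The factorization $\sum_c=\sign(\eta_{c_0})\sign(\rho_{c_0})\prod_i(1+\epsilon_i)$ then follows, and the identification of the surviving words with the support of $\inv(\tTwod)$ is clean.

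This is a genuinely different route from the paper's. The paper proves the second equality by representation theory: it observes that both $\inv(\tTwod)$ and $\inv(\tOned)^{\shuffle 2}$ live in the irreducible $\mathrm S_{2d}$-module of shape $(2^d)$, restricts to the subgroup $H\cong\mathrm S_d\times\mathrm S_d$ generated by transpositions $(2i-1,2i+1)$ and $(2i,2i+2)$, shows (Lemmas~\ref{lem:invSquare} and~\ref{lem:specialYT}) that both elements lie in the sign-isotypic component for $H$, and then invokes Littlewood--Richardson (Lemma~\ref{lem:oneDimensional}) to conclude that this component is one-dimensional, forcing proportionality. The constant $2^d$ is then read off from the single word $\w{11\cdots dd}$. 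Your approach bypasses all of this machinery with a direct coefficient computation; it is more elementary and entirely self-contained, needing nothing from Section~\ref{subsec:RTsymmetricgroup}. What the paper's argument buys in exchange is a structural explanation of \emph{why} the identity holds --- both sides are forced to be proportional because they sit in the same one-dimensional invariant subspace --- whereas your proof verifies the identity without revealing this underlying rigidity.
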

Before presenting the proof for Theorem~\ref{thm:main}, we show that Theorem~\ref{thm:CGM} is a consequence of it. 
\begin{corollary}
  \label{cor:relationToCGM}
  Equation~\eqref{eq:CGM} holds.
\end{corollary}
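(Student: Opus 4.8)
The plan is to evaluate the purely algebraic identity of Theorem~\ref{thm:main} against the iterated-integrals signature $\sigma(X)$, regarded as a linear functional on $T(\R^d)$. The essential input is the shuffle identity~\eqref{eq:shuffleIdentity}: $\sigma(X)$ is an algebra morphism from $(T(\R^d),\shuffle)$ to $\R$. Since the shuffle algebra is commutative, ${\det}_\shuffle$ of a matrix over it is a polynomial in the entries using only $+$ and $\shuffle$, so it is compatible with the morphism $\sigma(X)$; that is, for any $M\in (T(\R^d))^{d\times d}$,
\begin{align*}
  \Big\langle {\det}_\shuffle(M),\ \sigma(X)\Big\rangle
  =
  \det\Big( \big\langle M_{ij},\sigma(X)\big\rangle \Big)_{1\le i,j\le d},
\end{align*}
the right-hand determinant being the ordinary one over $\R$.

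First I would apply this to $M=\Wd$. Its $(i,j)$-entry is the word $\w{ij}$, and $\big\langle \w{ij},\sigma(X)\big\rangle = \int dX^{(i)}dX^{(j)}$ by the definition of the signature. Hence the left-most expression in Theorem~\ref{thm:main}, evaluated at $\sigma(X)$, is exactly the determinant on the left-hand side of~\eqref{eq:CGM}.

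Next I would evaluate the right-most expression in Theorem~\ref{thm:main}. Applying~\eqref{eq:shuffleIdentity} to the shuffle-square,
\begin{align*}
  \Big\langle \tfrac1{2^d}\,\inv(\tOned)^{\shuffle 2},\ \sigma(X)\Big\rangle
  =
  \tfrac1{2^d}\,\Big\langle \inv(\tOned),\ \sigma(X)\Big\rangle^{2}.
\end{align*}
Since $\tOned$ consists of a single column of length $d$, the defining sum for $\inv(\tOned)$ ranges over all of $\mathrm S_d$, so $\big\langle \inv(\tOned),\sigma(X)\big\rangle = \pm \sum_{\pi\in\mathrm S_d}\sign(\pi)\int dX^{(\pi(1))}\cdots dX^{(\pi(d))}$; the global sign is irrelevant because it is squared. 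Thus $\big\langle \tfrac1{2^d}\inv(\tOned)^{\shuffle 2},\sigma(X)\big\rangle$ equals the right-hand side of~\eqref{eq:CGM}.

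Equating the two evaluations yields~\eqref{eq:CGM}. The only minor points to check are the permutation-action convention in the definition of $\inv$ (harmless here, as noted) and the compatibility of ${\det}_\shuffle$ with algebra morphisms; neither is a genuine obstacle, since all the combinatorial substance already resides in Theorem~\ref{thm:main}, and the corollary is simply its image under a single multiplicative character.
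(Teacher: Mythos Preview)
Your proposal is correct and follows essentially the same route as the paper's proof: both arguments evaluate the identity of Theorem~\ref{thm:main} against $\sigma(X)$, using the shuffle identity~\eqref{eq:shuffleIdentity} once to turn ${\det}_\shuffle(\Wd)$ into the real determinant of iterated integrals and once more to turn the shuffle-square into an ordinary square. Your explicit remark that ${\det}_\shuffle$ is compatible with any algebra morphism $(T(\R^d),\shuffle)\to\R$ is exactly the content of the paper's step invoking~\eqref{eq:shuffleIdentity} on the determinant side, and your hedge about a possible global sign in $\inv(\tOned)$ is harmless (in fact no sign appears, since $\sign(\sigma)=\sign(\sigma^{-1})$).
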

\begin{proof}

  \begin{align*}
    &\det
    \begin{pmatrix}
      \displaystyle{\int}  dX^{(1)} dX^{(1)} & \dots & \displaystyle{\int}  dX^{(1)} dX^{(d)} \\
             \vdots           & \ddots & \vdots \\
      \displaystyle{\int}  dX^{(d)} dX^{(1)} & \dots & \displaystyle{\int}  dX^{(1)} dX^{(d)}
    \end{pmatrix} 
    =\det
    \begin{pmatrix}
      \langle \w1\w1, \sigma(X) \rangle & \dots & \langle \w1\w{d}, \sigma(X) \rangle \\
             \vdots           & \ddots & \vdots \\
      \langle \w{d}\w1, \sigma(X) \rangle & \dots & \langle \w{d}\w{d}, \sigma(X) \rangle \\[0.6in]
    \end{pmatrix}  \\ &\stackrel{\eqref{eq:shuffleIdentity}}=
    \Big\langle {\det}_\shuffle( \Wd ), \sigma( X ) \Big\rangle 
    \stackrel{\text{Thm }~\ref{thm:main}}=
    \Big\langle \frac{1}{2^d} \inv\left( \tOned \right)^{\shuffle 2}, \sigma( X ) \Big\rangle =
    \frac{1}{2^d} \Big\langle \inv\left( \tOned \right)^{\shuffle 2}, \sigma( X ) \Big\rangle \\[5pt] &\stackrel{\eqref{eq:shuffleIdentity}}=
    \frac{1}{2^d} \Big\langle  \inv\left( \tOned \right), \sigma( X ) \Big\rangle^2  
    = \frac{1}{2^d} \left( \sum_{\sigma \in \mathrm S_d} \sign(\sigma) \int dX^{\sigma(1)} \dots dX^{\sigma(d)} \right)^2.
  \end{align*}
\end{proof}

From Proposition~\ref{prop:directProof} we already know that the first equality holds.
To show the second equality we use the following strategy:
\begin{enumerate}
  \item[\textbf{Step 1}] Show that the two terms lie in the same isotypic component of a representation of a certain subgroup of $\mathrm S_{2d}$, and that this isotypic component has dimension 1.
  \item[\textbf{Step 2}] Determine the pre-factors, by looking at the factors on a particular word in each side of the equality.
\end{enumerate}

For \emph{Step 1}, we first observe
that both $\inv(\tTwod)$
and $\inv(\tOned)^{\shuffle 2}$ are in the irreducible
representation for $\mathrm S_{2d}$ corresponding to the shape
$(2,2,\dots, 2)$. Denote by $\chi_1$ the irreducible character for this shape.

Recall that we denote by $H_d$ the subgroup of $\mathrm S_{2d}$ given by 
\begin{align*}
    H_d = \Big\langle (1,3), (2,4), (3,5), \dots, (2d-2,2d) \Big\rangle \cong \mathrm S_d \times \mathrm S_d.
\end{align*}

Note that $\chi_1$ is also a character for $H_d$, but it is not irreducible.
Let $\chi_2(h) := \operatorname{sign}( h )$, for all $h \in H_d$, be the character for the one-dimensional sign-representation restricted from $\mathrm S_{2d}$ to $H_d$.
This representation is still irreducible, since it is one-dimensional.

Recall the two standard Young tableaux defined in~\eqref{eq:twoSTYs}.
The following two results show that both $(\inv(\tOned)^{\shuffle 2}$ and $\inv(\tTwod)$
lie in the isotypic component related of this sign-representation. 

\begin{lemma}  \label{lem:invSquare}
For all $h\in H_d$, 
  \begin{align*}
    h\cdot \inv_d\left(\tOnedBig\right)^{\shuffle 2} = \operatorname{sign}(h) \cdot \inv_d\left(\tOnedBig\right)^{\shuffle 2}.
  \end{align*}
\end{lemma}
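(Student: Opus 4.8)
We need to show that the element $\inv(\tOned)^{\shuffle 2} \in T(\R^d)$, viewed as living in the degree-$2d$ part of the tensor algebra, transforms according to the sign character under the action of the subgroup $H \leq \mathrm{S}_{2d}$ generated by the transpositions $(1,3),(3,5),\dots$ (acting on "odd slots") and $(2,4),(4,6),\dots$ (acting on "even slots"). Here $H \cong \mathrm{S}_d \times \mathrm{S}_d$, where the first factor permutes the odd positions $\{1,3,\dots,2d-1\}$ block-transitively and the second permutes the even positions $\{2,4,\dots,2d\}$. The sign of $h = (h_{\mathrm{odd}}, h_{\mathrm{even}})$ as an element of $\mathrm{S}_{2d}$ is $\sign(h_{\mathrm{odd}})\sign(h_{\mathrm{even}})$.
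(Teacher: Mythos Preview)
Your proposal correctly identifies the structure of $H$ as $\mathrm S_d \times \mathrm S_d$ (one factor permuting the odd positions $\{1,3,\dots,2d-1\}$, the other the even positions) and correctly observes that the sign of $(h_{\mathrm{odd}},h_{\mathrm{even}})$ inside $\mathrm S_{2d}$ is $\sign(h_{\mathrm{odd}})\sign(h_{\mathrm{even}})$. However, the proposal stops there: what you have written is a restatement of the claim, not a proof of it. The actual content of the lemma---namely, \emph{why} each generating transposition $(i,i+2)$ flips the sign of $\inv(\tOned)^{\shuffle 2}$---is entirely absent.

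This is not automatic. A word appearing in $\inv(\tOned)^{\shuffle 2}$ arises by interleaving a word from the ``left'' copy of $\inv(\tOned)$ with a word from the ``right'' copy, and the positions $i,i+1,i+2$ may be occupied by letters from either copy in various patterns. The paper's argument proceeds exactly by this case analysis: for a fixed transposition $h=(i,i+2)$ it tags each position as $L$ or $R$ according to which factor it came from, enumerates the eight possible $L/R$ patterns at positions $i,i+1,i+2$, and checks that in every case the swap of positions $i$ and $i+2$ amounts to a single transposition of the $L$-indices or of the $R$-indices (using that the middle position $i+1$ may also need to be reinterpreted). Since $\inv(\tOned)$ is totally antisymmetric in its own positions, this produces the required sign change term by term. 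You need to supply an argument of this kind (or an alternative one) to complete the proof; the reduction to generators and the identification $H\cong\mathrm S_d\times\mathrm S_d$ are helpful framing but do not by themselves yield the result.
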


\begin{proof}
Note that it is enough to show this result for generators of the subgroup $H_d$,
in particular permutations of the form $h=(i,i+2)$, with $i\in \{1,2,\dots, 2d-2\}$
are sufficient.

By \eqref{eq:inv1}, we have that
$\displaystyle{\inv(\tOned)^{\shuffle 2} = \sum_{\sigma,\tau\in \mathrm S_d} \sign(\sigma)\sign(\tau) \left[ \sigma(\w{12}\dots\w{d}) \shuffle \tau(\w{12}\dots\w{d})\right]}$, 
and we want to show that 
\begin{multline}
\sum_{\sigma,\tau\in \mathrm S_d} \sign(\sigma)\sign(\tau) \left[ \sigma(\w{12}\dots\w{d}) \shuffle \tau(\w{12}\dots\w{d})\right]\label{eq:expansion} \\ =
\sum_{\sigma,\tau\in \mathrm S_d} \sign((i,i+2))\sign(\sigma)\sign(\tau) (i,i+2) \left[ \sigma(\w{12}\dots\w{d}) \shuffle \tau(\w{12}\dots\w{d})\right]. 
\end{multline}
If we expand $\sigma(\w{12}\dots\w{d}) \shuffle \tau(\w{12}\dots\w{d})$ in each side and do not simplify the expression, we obtain on both sides a huge sum of words with coefficients in $\{\pm 1\}$. To see that both expressions are equal, we show that there is a one-to-one correspondence between the terms in both sums, and that the signs are equal. 

We are considering all terms appearing in the sums in Equation~\eqref{eq:expansion} and we identify each word $\w{w}$ with a triplet $[\sigma,\tau,P]$, where $P$ keeps track of the position of the letters coming from $\sigma(\w{12}\dots\w{d})$ as an increasing list. To visualize our argument, we color-code the letters $\w{12}\dots \w{d}$ when they are letters coming from $\sigma(\w{12}\dots \w{d})$ and $\ww{12}\dots \ww{d}$ when they are letters coming from $\tau(\w{12}\dots \w{d})$. 

Let $\w{w}=\w{w}_1\dots \w{w}_i\w{w}_{i+1} \w{w}_{i+2}\dots \w{w}_{2d}$ be a word appearing in $\sigma(\w{12}\dots\w{d}) \shuffle \tau(\w{12}\dots\w{d})$. This word corresponds to a triplet $[\sigma,\tau,P]$. Now, we apply $h=(i,i+2)$ to $\w{w}$ and we want to describe the triplet $[\sigma',\tau',P']$ corresponding to $h\w{w}$. That is, we want to describe the one-to-one correspondence
\[
\begin{array}{ccc}
\w{w}=\w{w}_1\dots \w{w}_i\w{w}_{i+1} \w{w}_{i+2}\dots \w{w}_{2d} & \longmapsto & h\w{w}=\w{w}_1\dots \w{w}_{i+2}\w{w}_{i+1} \w{w}_{i}\dots \w{w}_{2d} \\[0.15in]
[\sigma,\tau,P] & \longmapsto & [\sigma',\tau',P']
\end{array}
\]
We describe this map by cases depending on whenever the letters $\w{w}_i$, $\w{w}_{i+1}$ and $\w{w}_{i+2}$ are from $\sigma(\w{12}\dots\w{d})$ or from $\tau(\w{12}\dots\w{d})$; that is, depending on the \emph{colors} of the letters $\w{w}_i$, $\w{w}_{i+1}$, and $\w{w}_{i+2}$. Before describing all the possible cases, we need some more notation. Recall that $\sigma$ and $\tau$ act on the left, which means that the act on the positions of $\w{12}\dots\w{d}$. Therefore, $\sigma(\w{12}\dots\w{d})=\sigma^{-1}(\w{1})\sigma^{-1}(\w{2})\dots\sigma^{-1}(\w{d})$ and $\tau(\w{12}\dots\w{d})=\tau^{-1}(\w{1})\tau^{-1}(\w{2})\dots\tau^{-1}(\w{d})$. This implies that the letters appearing in $\w{w}$ corresponds to $\sigma^{-1}(\w{j})$ or $\tau^{-1}(\w{j})$, for some $\w{j}\in [\w{d}]$. 

Now, we are ready to describe all the possible cases. We only look at the relevant part of $\w{w}$, and we include an example using $\sigma = (1,3,4)$, $\tau = (1,2,3)$ and $\w{4213} \shuffle  \ww{3124}$ to illustrate each case. 
\begin{enumerate}
    \item[(C1)] If the three letters are coming from $\sigma(\w{12}\dots\w{d})$, then
\[
\begin{array}{ccc||ccc}
 & \w{w} & &  & h\w{w} & \\[0.1in]
  \multicolumn{1}{c|}{\w{w}_i} &  \multicolumn{1}{c|}{\w{w}_{i+1}} & \w{w}_{i+2}  &  \multicolumn{1}{c|}{\w{w}_{i+2}} &  \multicolumn{1}{c|}{\w{w}_{i+1}} & \w{w}_{i} \\[0.1in]
  \multicolumn{1}{c|}{\sigma^{-1}(\w{j})} &  \multicolumn{1}{c|}{\sigma^{-1}(\w{j+1})} & \sigma^{-1}(\w{j+2}) &   \multicolumn{1}{c|}{\sigma^{-1}(\w{j+2})} &  \multicolumn{1}{c|}{\sigma^{-1}(\w{j+1})} & \sigma^{-1}(\w{j})
\end{array}
\]
Therefore, $\sigma'=(j,j+2)\sigma$, $\tau'=\tau$, and $P'=P$ since the position of the blue letters does not change.

    \item[(C2)] If two of the letters are coming from $\sigma(\w{12}\dots\w{d})$ and the other one is coming from $\tau(\w{12}\dots\w{d})$, then we have three cases:
    \begin{itemize}
        \item[(C2.a)]
\[
\begin{array}{ccc||ccc}
 & \w{w} & & & h\w{w} & \\[0.1in]
  \multicolumn{1}{c|}{\w{w}_i} &  \multicolumn{1}{c|}{\w{w}_{i+1}} & \ww{w}_{i+2}  &  \multicolumn{1}{c|}{\ww{w}_{i+2}} &  \multicolumn{1}{c|}{\w{w}_{i+1}} & \w{w}_{i} \\[0.1in]
 \multicolumn{1}{c|}{\sigma^{-1}(\w{j})} &  \multicolumn{1}{c|}{\sigma^{-1}(\w{j+1})} & \tau^{-1}(\w{k}) &   \multicolumn{1}{c|}{\tau^{-1}(\w{k})} &  \multicolumn{1}{c|}{\sigma^{-1}(\w{j+1})} & \sigma^{-1}(\w{j})
\end{array}
\]
Therefore, $\sigma'=(j,j+1)\sigma$ and $\tau'=\tau$. Moreover, we know that $P$ is of the form $P=(\dots, i, i+1,\dots)$, with no $i+2$ in $P$. Therefore, $P'$ is of the form $P'=(\dots, i+1,i+2,\dots)$, with no $i$ in $P'$. 

        \item[(C2.b)]
\[
\begin{array}{ccc||ccc}
 & \w{w} & & & h\w{w} & \\[0.1in]
  \multicolumn{1}{c|}{\w{w}_i} &  \multicolumn{1}{c|}{\ww{w}_{i+1}} & \w{w}_{i+2}  &  \multicolumn{1}{c|}{\w{w}_{i+2}} &  \multicolumn{1}{c|}{\ww{w}_{i+1}} & \w{w}_{i} \\[0.1in]
  \multicolumn{1}{c|}{\sigma^{-1}(\w{j})} &  \multicolumn{1}{c|}{\tau^{-1}(\w{k})} & \sigma^{-1}(\w{j+1}) &   \multicolumn{1}{c|}{\sigma^{-1}(\w{j+1})} &  \multicolumn{1}{c|}{\tau^{-1}(\w{k})} & \sigma^{-1}(\w{j})
\end{array}
\]
Therefore, $\sigma'=(j,j+1)\sigma$, $\tau'=\tau$, and $P'=P$ since the position of the blue letters does not change. 

        \item[(C2.c)]
\[
\begin{array}{ccc||ccc}
 & \w{w} & & & h\w{w} & \\[0.1in]
  \multicolumn{1}{c|}{\ww{w}_i} &  \multicolumn{1}{c|}{\w{w}_{i+1}} & \w{w}_{i+2}  &  \multicolumn{1}{c|}{\w{w}_{i+2}} &  \multicolumn{1}{c|}{\w{w}_{i+1}} & \ww{w}_{i} \\[0.1in]
  \multicolumn{1}{c|}{\tau^{-1}(\w{k})} &  \multicolumn{1}{c|}{\sigma^{-1}(\w{j})} & \sigma^{-1}(\w{j+1}) &   \multicolumn{1}{c|}{\sigma^{-1}(\w{j+1})} &  \multicolumn{1}{c|}{\sigma^{-1}(\w{j})} & \tau^{-1}(\w{k})
\end{array}
\]
Therefore, $\sigma'=(j,j+1)\sigma$ and $\tau'=\tau$. Moreover, we know that $P$ is of the form $P=(\dots, i+1, i+2,\dots)$, with no $i$ in $P$. Therefore, $P'$ is of the form $P'=(\dots, i,i+1,\dots)$, with no $i+2$ in $P'$. 
    \end{itemize}

    \item[(C3)] If one of the letters is coming from $\sigma(\w{12}\dots\w{d})$ and the other two are coming from $\tau(\w{12}\dots\w{d})$, then we have three cases:
    \begin{itemize}
        \item[(C3.a)]
\[
\begin{array}{ccc||ccc}
 & \w{w} & &  & h\w{w} & \\[0.1in]
  \multicolumn{1}{c|}{\w{w}_i} &  \multicolumn{1}{c|}{\ww{w}_{i+1}} & \ww{w}_{i+2}  &  \multicolumn{1}{c|}{\ww{w}_{i+2}} &  \multicolumn{1}{c|}{\ww{w}_{i+1}} & \w{w}_{i} \\[0.1in]
  \multicolumn{1}{c|}{\sigma^{-1}(\w{j})} &  \multicolumn{1}{c|}{\tau^{-1}(\w{k})} & \tau^{-1}(\w{k+1}) &   \multicolumn{1}{c|}{\tau^{-1}(\w{k+1})} &  \multicolumn{1}{c|}{\tau^{-1}(\w{k})} & \sigma^{-1}(\w{j})
\end{array}
\]
Therefore, $\sigma'=\sigma$ and $\tau'=(k,k+1)\tau$. Moreover, we know that $P$ is of the form $P=(\dots, i,\dots)$, with no $i+1, i+2$ in $P$. Therefore, $P'$ is of the form $P'=(\dots, i+2,\dots)$, with no $i,i+1$ in $P'$. 

        \item[(C3.b)]
\[
\begin{array}{ccc||ccc}
 & \w{w} & &  & h\w{w} & \\[0.1in]
 \multicolumn{1}{c|}{ \ww{w}_i} &  \multicolumn{1}{c|}{\w{w}_{i+1}} & \ww{w}_{i+2}  &  \multicolumn{1}{c|}{\ww{w}_{i+2}} &  \multicolumn{1}{c|}{\w{w}_{i+1}} & \ww{w}_{i} \\[0.1in]
  \multicolumn{1}{c|}{\tau^{-1}(\w{k})} &  \multicolumn{1}{c|}{\sigma^{-1}(\w{j})} & \tau^{-1}(\w{k+1}) &   \multicolumn{1}{c|}{\tau^{-1}(\w{k+1})} &  \multicolumn{1}{c|}{\sigma^{-1}(\w{j})} & \tau^{-1}(\w{k})
\end{array}
\]
Therefore, $\sigma'=\sigma$, $\tau'=(k,k+1)\tau$, and $P'=P$ since the position of the blue letters does not change.  

        \item[(C3.c)]
\[
\begin{array}{ccc||ccc}
 & \w{w} & &  & h\w{w} & \\[0.1in]
 \multicolumn{1}{c|}{\ww{w}_i} & \multicolumn{1}{c|}{\ww{w}_{i+1}} & \w{w}_{i+2}  & \multicolumn{1}{c|}{\w{w}_{i+2}} & \multicolumn{1}{c|}{\ww{w}_{i+1}} & \ww{w}_{i} \\[0.1in]
 \multicolumn{1}{c|}{\tau^{-1}(\w{k})} & \multicolumn{1}{c|}{\tau^{-1}(\w{k+1})} & \sigma^{-1}(\w{j}) &  \multicolumn{1}{c|}{\sigma^{-1}(\w{j})} & \multicolumn{1}{c|}{\tau^{-1}(\w{k+1})} & \tau^{-1}(\w{k})
\end{array}
\]
Therefore, $\sigma'=\sigma$ and $\tau'=(k,k+1)\tau$. Moreover, we know that $P$ is of the form $P=(\dots, i+2,\dots)$, with no $i, i+1$ in $P$. Therefore, $P'$ is of the form $P'=(\dots, i,\dots)$, with no $i+1,i+2$ in $P'$. 
    \end{itemize}
  
  \item[(C4)] If the three letters are coming from $\tau(\w{12}\dots\w{d})$, we only have one case: 
  \[
\begin{array}{ccc||ccc}
 & \w{w} &  & & h\w{w} & \\[0.1in]
 \multicolumn{1}{c|}{\ww{w}_i} &  \multicolumn{1}{c|}{\ww{w}_{i+1}} & \ww{w}_{i+2}  &  \multicolumn{1}{c|}{\ww{w}_{i+2}} &  \multicolumn{1}{c|}{\ww{w}_{i+1}} & \ww{w}_{i} \\[0.1in]
 \multicolumn{1}{c|}{\tau^{-1}(\w{k})} &  \multicolumn{1}{c|}{\tau^{-1}(\w{k+1})} & \tau^{-1}(\w{k+2})  &  \multicolumn{1}{c|}{\tau^{-1}(\w{k+2})} &  \multicolumn{1}{c|}{\tau^{-1}(\w{k+1}) }& \tau^{-1}(\w{k})
\end{array}
\]
Therefore, $\sigma'=\sigma$, $\tau'=(k,k+2)\tau$, and $P'=P$ since the position of the blue letters does not change. 
\end{enumerate}

Finally, we look at the sign. Notice that in all the cases, $\sign(\sigma)\sign(\tau) = \sign(i,i+2)\sign(\sigma')\sign(\tau')$
because either $\sigma'$ nor $\tau'$ differ from $\sigma$ and $\tau$, respectively, by a transposition. 
\end{proof}

\begin{example}
Consider $\sigma = (1,3,4)$ and $\tau = (1,2,3)$, for which $\sigma(\w{1234}) = \w{4213}$ and $\tau(\w{1234}) = \ww{3124}$. Then, $\w{w}=\w{4}\ww{31}\w{213}\ww{24}$ is a word appearing in $\w{4213} \shuffle  \ww{3124}$. Let us illustrate the cases in the previous proof with this example with $i=4$:
\begin{equation*}
\begin{array}{l||l}
\emph{Case (C1)}  & \emph{Case (C2.a)}   \\ \hline
  \begin{array}{c|c}
  \w{4}\ww{31}\w{213}\ww{24} &  \w{4}\ww{31}\w{312}\ww{24}  \\[0.1in]
  \sigma(\w{1234}) = \w{4213} & \sigma'(\w{1234}) = \w{4312} = (2,4)\sigma \\[0.075in]
  \tau (\w{1234}) = \w{3124} & \tau'(\w{1234}) = \w{3124} \\[0.075in]
  P = [1,4,5,6] & P'=[1,4,5,6] 
\end{array} &  
\begin{array}{c|c}
  \w{42}\ww{3}\w{13}\ww{124} &  \w{42}\ww{31}\w{31}\ww{24}  \\[0.1in]
  \sigma(\w{1234}) = \w{4213} & \sigma'(\w{1234}) = \w{4231} = (3,4)\sigma \\[0.075in]
  \tau (\w{1234}) = \w{3124} & \tau'(\w{1234}) = \w{3124} \\[0.075in]
  P = [1,2,4,5] & P'=[1,2,5,6] 
\end{array} 
\end{array}
\end{equation*}
\begin{equation*}
\begin{array}{l||l}
\emph{Case (C2.b)}  & \emph{Case (C2.c)}   \\ \hline
\begin{array}{c|c}
  \w{42}\ww{3}\w{1}\ww{1}\w{3}\ww{24} &  \w{42}\ww{3}\w{3}\ww{1}\w{1}\ww{24}  \\[0.1in]
  \sigma(\w{1234}) = \w{4213} & \sigma'(\w{1234}) = \w{4231} = (3,4)\sigma \\[0.075in]
  \tau (\w{1234}) = \w{3124} & \tau'(\w{1234}) = \w{3124} \\[0.075in]
  P = [1,2,4,6] & P'=[1,2,4,6] 
\end{array} & 
\begin{array}{c|c}
  \w{42}\ww{31}\w{13}\ww{24} &  \w{42}\ww{3}\w{31}\ww{124}  \\[0.1in]
  \sigma(\w{1234}) = \w{4213} & \sigma'(\w{1234}) = \w{4231} = (3,4)\sigma \\[0.075in]
  \tau (\w{1234}) = \w{3124} & \tau'(\w{1234}) = \w{3124} \\[0.075in]
  P = [1,2,5,6] & P'=[1,2,4,5] 
\end{array} 
\end{array}
\end{equation*}
\begin{equation*}
\begin{array}{l||l}
\emph{Case (C3.a)}  & \emph{Case (C3.b) }  \\ \hline
\begin{array}{c|c}
  \w{42}\ww{3}\w{1}\ww{12}\w{3}\ww{4} &  \w{42}\ww{321}\w{13}\ww{4}  \\[0.1in]
  \sigma(\w{1234}) = \w{4213} & \sigma'(\w{1234}) = \w{4213} \\[0.075in]
  \tau (\w{1234}) = \w{3124} & \tau'(\w{1234}) = \w{3214} = (2,3)\tau \\[0.075in]
  P = [1,2,4,7] & P'=[1,2,6,7] 
\end{array} &
\begin{array}{c|c}
  \w{42}\ww{31}\w{1}\ww{2}\w{3}\ww{4} &  \w{42}\ww{32}\w{1}\ww{1}\w{3}\ww{4}  \\[0.1in]
  \sigma(\w{1234}) = \w{4213} & \sigma'(\w{1234}) = \w{4213} \\[0.075in]
  \tau (\w{1234}) = \w{3124} & \tau'(\w{1234}) = \w{3214} = (2,3)\tau \\[0.075in]
  P = [1,2,3,4] & P'=[1,2,3,6] 
\end{array} 
\end{array}
\end{equation*}
\begin{equation*}
\begin{array}{l||l}
\emph{Case (C3.c)}  & \emph{Case (C4)}   \\ \hline
\begin{array}{c|c}
  \w{42}\ww{312}\w{13}\ww{4} &  \w{42}\ww{3}\w{1}\ww{21}\w{3}\ww{4}  \\[0.1in]
  \sigma(\w{1234}) = \w{4213} & \sigma'(\w{1234}) = \w{4213} \\[0.075in]
  \tau (\w{1234}) = \w{3124} & \tau'(\w{1234}) = \w{3214} = (2,3)\tau \\[0.075in]
  P = [1,2,6,4] & P'=[1,2,4,6] 
\end{array} &
\begin{array}{c|c}
  \w{421}\ww{312}\w{3}\ww{4} &  \w{421}\ww{213}\w{3}\ww{4}  \\[0.1in]
  \sigma(\w{1234}) = \w{4213} & \sigma'(\w{1234}) = \w{4213} \\[0.075in]
  \tau (\w{1234}) = \w{3124} & \tau'(\w{1234}) = \w{2134} = (1,3)\tau \\[0.075in]
  P = [1,2,3,7] & P'=[1,2,3,7] 
\end{array}
    \end{array}
\end{equation*}
\end{example}

\vspace{0.3cm}

It is immediate that $\inv(\tTwod)$ satisfies the analogous statement.
\begin{lemma}\label{lem:specialYT}
For all $h\in H_d$, 
  \begin{align*}
    h\cdot
    \inv\left(\tTwodBig\right)
    =
    \operatorname{sign}(h)\cdot
    \inv\left(\tTwodBig\right).
  \end{align*}
\end{lemma}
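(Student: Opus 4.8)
The plan is to mirror the proof of Lemma~\ref{lem:invSquare}, but working directly from the combinatorial description of $\inv(\tTwod)$ as a signed sum of words rather than from a left/right factorization. Recall from Section~\ref{subsec:volume} that
\[
  \inv(\tTwod) = \sum_{\sigma} \sign(\sigma)\, \sigma\, \w{w}, \qquad \w{w} = \w{1122\dots dd},
\]
where $\sigma$ ranges over the permutations of $\{1,\dots,2d\}$ that preserve each of the two columns of $\tTwod$ setwise; equivalently $\inv(\tTwod) = \left(\sum_{\alpha\in\mathrm S_d}\sign(\alpha)\alpha_{\mathrm{col}\,1}\right)\left(\sum_{\beta\in\mathrm S_d}\sign(\beta)\beta_{\mathrm{col}\,2}\right)\w{w}$, i.e. it is fully antisymmetric under any permutation of the odd positions $\{1,3,\dots,2d-1\}$ among themselves and, separately, fully antisymmetric under any permutation of the even positions $\{2,4,\dots,2d\}$ among themselves.

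The key observation is that the generators of $H$ listed in~\eqref{eq:H} are exactly the adjacent transpositions $(2k-1,2k+1)$ of odd positions and $(2k-2,2k)$ of even positions. First I would reduce, as in Lemma~\ref{lem:invSquare}, to the case where $h$ is a single such generator, say $h=(i,i+2)$ with $i,i+2$ both odd (the even case being identical). This transposition lies in the column stabilizer of the first column of $\tTwod$, so it is among the permutations $\sigma$ appearing in the defining sum of $\inv(\tTwod)$ — or more to the point, $h$ acting on positions is precisely the same as the action of a sign-$(-1)$ element of the column-$1$ stabilizer, under which the whole signed sum is by construction anti-invariant. Hence $h\cdot\inv(\tTwod) = \sign(h)\cdot\inv(\tTwod) = -\inv(\tTwod)$, and since the generators of $H$ generate $H$ and $\sign$ is a homomorphism, the identity $h\cdot\inv(\tTwod)=\sign(h)\inv(\tTwod)$ follows for all $h\in H$.

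Concretely, the argument goes: write $\inv(\tTwod)=\sum_{c}\pm\, c$ over words $c$, and pair up each word with its image under the column-$1$ transposition that swaps the values in rows $\tfrac{i+1}{2}$ and $\tfrac{i+3}{2}$ of the first column; this pairing is an involution on the index set with opposite signs, which is exactly the statement that applying $h$ to the positions negates the sum. Alternatively, and perhaps cleanest for the write-up, one can simply invoke Lemma~\ref{lem:invSquare} together with the first equality in Theorem~\ref{thm:main} (proved in Proposition~\ref{prop:directProof}), namely $\inv(\tTwod) = {\det}_\shuffle(\Wd)$, but since at this point in the paper we only wish to record the elementary symmetry, the direct column-antisymmetry argument is the honest route.

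I do not expect any real obstacle here: the statement is, as the text says, ``immediate,'' and the only thing to be careful about is bookkeeping — matching the generators of $H$ in~\eqref{eq:H} to the odd-position and even-position adjacent transpositions, and checking that each such transposition is genuinely inside the column stabilizer used to define $\inv(\tTwod)$ (it is, because columns of $\tTwod$ are the odd entries $\{1,3,\dots,2d-1\}$ and the even entries $\{2,4,\dots,2d\}$ respectively). The mild subtlety worth a sentence is that $H$ is \emph{not} the full column stabilizer $\mathrm S_d\times\mathrm S_d$ but only a subgroup generated by adjacent transpositions; this is harmless since anti-invariance under a generating set of transpositions already forces anti-invariance under the group they generate, the sign character being determined on transpositions.
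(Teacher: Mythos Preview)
Your argument is correct and is exactly the ``immediate'' reason the paper has in mind: the generators of $H$ in~\eqref{eq:H} are transpositions inside the column stabilizer of $\tTwod$ (column~1 is $\{1,3,\dots,2d-1\}$, column~2 is $\{2,4,\dots,2d\}$), and $\inv(\tTwod)$ is by construction sign-antisymmetric under that stabilizer. The paper gives no proof beyond calling it immediate, so your write-up simply spells out the same observation.

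One small correction to your side remark: $H$ is in fact \emph{equal} to the full column stabilizer $\mathrm S_d\times\mathrm S_d$, not a proper subgroup of it, since the adjacent transpositions $(1,3),(3,5),\dots$ already generate all permutations of the odd positions (and likewise for the even positions). This does not affect the argument, but the ``mild subtlety'' you flag is not actually present.
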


Now, we want to see that the dimension of this isotypic component is 1. This follows from the fact that the multiplicity of the irreducible character $\chi_2$  in $\chi_1$ is equal to one in the case we are considering.
\begin{lemma} \label{lem:oneDimensional} 
Consider the decomposition (with respect to $H_d$) of the   representation (with respect to $\mathrm S_{2d}$) corresponding to $\chi_1$.
In this decomposition, the multiplicity of the irreducible sign-representation, i.e. the irreducible representation corresponding to $\chi_2$, has multiplicity one.
\end{lemma}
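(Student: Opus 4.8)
The plan is to identify $H$, up to conjugacy, with a Young subgroup of $\mathrm{S}_{2d}$, to recognise $\chi_1$ and $\chi_2$ as characters of Specht(-type) modules, and thereby reduce the stated multiplicity to a single Littlewood--Richardson coefficient which is easily seen to equal $1$. First I would unpack $H$: the transpositions $(1,3),(3,5),\dots,(2d-3,2d-1)$ generate the full symmetric group on the odd positions $\{1,3,\dots,2d-1\}$, while $(2,4),(4,6),\dots,(2d-2,2d)$ generate the symmetric group on the even positions $\{2,4,\dots,2d\}$; these two subgroups commute and intersect trivially, so $H\cong\mathrm{S}_d\times\mathrm{S}_d$. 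Since the sign of a permutation of $\{1,\dots,2d\}$ factors as the product of the signs of its action on the odd positions and on the even positions, the restriction $\chi_2|_H$ is exactly the (irreducible, one-dimensional) outer tensor product $\sign\boxtimes\sign$. On the other side, $\chi_1$ is by definition the character of the Specht module $S^{(2^d)}$ of $\mathrm{S}_{2d}$ (the irreducible $\mathrm{S}_{2d}$-module of shape $(2^d)=(2,2,\dots,2)$, with $d$ rows).

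Next I would note that $H$ is conjugate in $\mathrm{S}_{2d}$ to the \emph{standard} Young subgroup $\mathrm{S}_d\times\mathrm{S}_d$ permuting $\{1,\dots,d\}$ and $\{d+1,\dots,2d\}$ separately (every partition of $\{1,\dots,2d\}$ into two $d$-element blocks arises this way), and that this conjugation carries $\chi_2|_H$ to $\sign\boxtimes\sign$ on the standard subgroup, because $\sign$ is a class function of $\mathrm{S}_{2d}$. Then the branching rule for the restriction of a Specht module to a Young subgroup --- equivalently, Frobenius reciprocity combined with the identification, under the characteristic map, of induction from $\mathrm{S}_d\times\mathrm{S}_d$ up to $\mathrm{S}_{2d}$ with multiplication of symmetric functions --- gives that the multiplicity we want equals
\begin{align*}
  \Big\langle \operatorname{Res}^{\mathrm{S}_{2d}}_{\mathrm{S}_d\times\mathrm{S}_d} S^{(2^d)},\ \sign\boxtimes\sign \Big\rangle = c^{\,(2^d)}_{(1^d),(1^d)},
\end{align*}
the Littlewood--Richardson coefficient, since the sign character of $\mathrm{S}_d$ is the Specht module $S^{(1^d)}$ and $s_{(1^d)}=e_d$.

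It then remains to check that $c^{(2^d)}_{(1^d),(1^d)}=1$, i.e. that $s_{(2^d)}$ occurs with coefficient $1$ in $e_d\cdot e_d=e_d\cdot s_{(1^d)}$. By the (dual) Pieri rule, $e_d\cdot s_{(1^d)}=\sum_{a=0}^{d}s_{(2^a,\,1^{\,2d-2a})}$, the sum running over the partitions obtained from the column $(1^d)$ by adjoining a vertical strip of $d$ boxes, and $(2^d)$ appears there exactly once (for $a=d$); equivalently, the only Littlewood--Richardson tableau of skew shape $(2^d)/(1^d)$ --- the second column --- with content $(1^d)$ is the column filled $1,2,\dots,d$ from the top, which is semistandard and has lattice reading word. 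I expect the only point requiring genuine care to be the bookkeeping in the middle step: verifying that $\chi_2|_H$ really matches $\sign\boxtimes\sign$ on a bona fide Young subgroup after conjugating $H$ into standard position, so that the branching rule applies precisely as stated; the representation-theoretic and symmetric-function facts used in the first and last steps are entirely standard.
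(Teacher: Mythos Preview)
Your proposal is correct and follows exactly the route the paper indicates: the paper simply asserts that the multiplicity is computed by the Littlewood--Richardson rule and defers the details to standard references, while you carry out that computation explicitly by identifying $H\cong\mathrm{S}_d\times\mathrm{S}_d$, recognising $\chi_2|_H=\sign\boxtimes\sign=S^{(1^d)}\boxtimes S^{(1^d)}$, and evaluating $c^{(2^d)}_{(1^d),(1^d)}=1$ via the dual Pieri rule. Your write-up is in fact more detailed than the paper's own treatment of this lemma.
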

\begin{proof}

We want to look at the \emph{sign representation} of $\mathrm S_{2d}$, $\sign$, in terms of representations of $\mathrm S_d$ since we have that $H_d \cong \mathrm S_d \times \mathrm S_d$. 

Consider $\lambda=\mu=(\underbrace{1,...,1}_{d \text{ times }})$, so that $V_\lambda = V_\mu$ is the sign representation of $\mathrm S_d$, and $\nu = (\underbrace{2,...,2}_{d \text{ times }})$. 

 Denote by $V_\lambda \boxtimes V_\mu$ the (external) product of representations (see
\cite[Exercise 2.36]{bib:FuHa}). Then, we claim that $V_\lambda \boxtimes V_\mu
   \cong
   \sign|_{H_d}$, where $\sign|_{H_d}$ denotes the restriction of the representation $\sign$ to $H_d$. By \cite[Exercise 2.36]{bib:FuHa} all the irreducible representations of $\mathrm  S_d\times\mathrm  S_d$ are given by the external product of irreducible representations of $\mathrm S_d$. In our case, there are exactly two one-dimensional irreducible representations of $\mathrm S_d$, the trivial and the sign representations. Thus, 
there are exactly $4$ different one-dimensional
representations of $\mathrm S_d \times \mathrm S_d$, obtained by taking the external product of either the trivial or the sign representation with, again, either the trivial or the sign representation.
Now, $\sign|_{H_d}$ is one-dimensional, and so it has to be one of these 4 different representations. Therefore, 
$\sign|_{H_d}$ has to correspond to the one claimed, $V_\lambda \boxtimes V_\mu$.

Hence, the multiplicity (as a $H_d$ representation) of $\chi_2$  inside of $\chi_1$ (as $\mathrm S_{2d}$ representation) is given by the multiplicity (as $\mathrm S_d\times \mathrm S_d$ representation)
of $V_\lambda \boxtimes V_\mu$ inside of $V_\nu$.
By
\cite[Exercise 4.43, Appendix A.1]{bib:FuHa}
the latter is given by the Littlewood-Richardson coefficient $c_{\lambda\mu}^{\nu}$. That is, we are counting the number of semi-standard Young tableaux of skew-shape $\nu/\lambda$ and type $\mu$ such that the reading word is a lattice word (see~\cite{bib:Stanley}). In our case, $\nu/\lambda$ is exactly a column and type $\mu$ means that we have to fill out the column exactly with the numbers $1,2,\dots, d$ in increasing order. Since there is only one way to do it and the lattice word condition is trivially satisfied, $c_{\lambda\mu}^\nu =1$. 

\end{proof}

We are now ready to finish the proof of our main theorem. 
\begin{proof}[Proof of Theorem~\ref{thm:main}]
  Let us start by summarising the situation.
  By Proposition~\ref{prop:directProof} we have that ${\det}_\shuffle(\Wd) = \inv(\tTwod)$.
  Now both $\inv(\tTwod)$ and $(\inv(\tOned))^{\shuffle 2}$ live in the irreducible $\mathrm S_{2d}$-representation corresponding to the shape $(2,2,\dots,2)$.
  Restricting this representation to the subgroup $H_d$ of $\mathrm S_{2d}$ and decomposing into irreducible representations of $H_d$,
  we have that $\det_{\shuffle}(\Wd)$ and $(\inv(\tOned))^{\shuffle 2}$ lie in the
  component corresponding to the sign-representation, see Lemmas
 ~\ref{lem:invSquare} and~\ref{lem:specialYT}.
  We also know, by Lemma~\ref{lem:oneDimensional}, that this component is $1$-dimensional.

  Our last step is to prove that the prefactors are as stated.
  Now, the word $\word{1122\dots dd}$ can only be obtained from
  shuffling $\w{12\dots d}$ with $\w{12\dots d}$,
  which results in a factor of $2^d$.
  Using the fact that it reduces to shuffle products of blocks of $2$ letters (Proposition~\ref{prop:directProof}),
  one sees that
  in $\det_{\shuffle}(\Wd)$, the word $\word{1122\dots dd}$ appears with the factor one.
\end{proof}

The following example illustrates the computation of the prefactors from Theorem~\ref{thm:main} for $d=3$. 
\begin{example}
  The word $\word{112233}$
  appearing in
  $\inv(\mathfrak t_{1,3})^{\shuffle 2}$,
  can only come from the shuffle $\w{123} \shuffle \w{123}$.
  This can happen in $2^3 = 8$ ways, 
  namely (using the color-code from the proof of Lemma~\ref{lem:invSquare})
    \begin{center}
  $\w{1}\ww{1}\w{2}\ww{2}\w{3}\ww{3}$, $\w{1}\ww{12}\w{23}\ww{3}$, 
  $\w{1}\ww{1}\w{2}\ww{23}\w{3}$, $\w{1}\ww{12}\w{2}\ww{3}\w{3}$, %\\
  $\ww{1}\w{12}\ww{2}\w{3}\ww{3}$, $\ww{1}\w{1}\ww{2}\w{23}\ww{3}$,
  $\ww{1}\w{12}\ww{23}\w{3}$, $\ww{1}\w{1}\ww{2}\w{2}\ww{3}\w{3}$.
  \end{center}
  
  On the other hand, in $\det_{\shuffle}(\mathcal{W}_3)$ it only appears once in the term coming from $\w{11} \shuffle \w{22} \shuffle \w{33}$
  (the diagonal of the matrix in the Leibniz formula for the determinant).
\end{example}

\section{Pfaffians and de Bruijn's formula}
\label{subsec:deBruijn}

Before stating and proving de Bruijn's formula,
we recall some definitions and present some technical results. 
Let $\mathcal A$ be a commutative algebra over $\R$.

\begin{lemma}\label{lem:detAS}
  Let $A \in \mathcal A^{d\times d}$ anti-symmetric,
  $v \in \mathcal A^d$ and $\lambda \in \mathcal A$. 
  \begin{itemize}
  \item If $d$ is even, $\det[ A + \lambda v v^\top ] = \det[ A ]$.
  \item If $d$ is odd, 
  $\det[ A + \lambda v v^\top ] = \sum_{i=1}^d \det[ R_i ]$,
  where $R_i$ denotes the matrix $A$, with $i^{\text{th}}$ column by that row of $\lambda v v^\top$.
  \end{itemize}
\end{lemma}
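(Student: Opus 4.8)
The plan is to treat both cases via the multilinearity of the determinant in the columns, applied to the rank-one perturbation $\lambda v v^\top$. Write $\lambda v v^\top$ as the matrix whose $j^{\text{th}}$ column is $\lambda v_j\, v$ (a scalar multiple of the fixed vector $v$). Expanding $\det[A + \lambda v v^\top]$ by multilinearity over the choice, in each column $j$, of either the column of $A$ or the column $\lambda v_j v$, one gets a sum over subsets $S \subseteq \{1,\dots,d\}$ of the columns replaced. Any term in which $|S|\ge 2$ has at least two columns proportional to $v$, hence is $0$. So only $S=\emptyset$ (giving $\det[A]$) and the $d$ singletons $S=\{i\}$ survive:
\begin{align*}
  \det[A + \lambda v v^\top] = \det[A] + \sum_{i=1}^d \det[R_i],
\end{align*}
where $R_i$ is $A$ with its $i^{\text{th}}$ column replaced by $\lambda v_i v$ (equivalently, by the $i^{\text{th}}$ column of $\lambda v v^\top$). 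This is exactly the claimed formula in the odd case, \emph{provided} one also argues that $\det[A]=0$ there; and in the even case it reduces to $\det[A+\lambda vv^\top]=\det[A]$ once one shows $\sum_i \det[R_i]=0$.

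For the even case, $\det[A]=\det[A^\top]=\det[-A]=(-1)^d\det[A]=\det[A]$ gives no information, so instead I would show directly that $\sum_{i=1}^d\det[R_i]=0$. Pull the common scalar $\lambda$ out of all the $R_i$ (each has exactly one column scaled by $\lambda$), and note $\det[R_i]=\lambda\, v_i\, \det[A^{(i)}]$, where $A^{(i)}$ denotes $A$ with its $i^{\text{th}}$ column replaced by $v$. But $\sum_{i=1}^d v_i \det[A^{(i)}]$ is, by cofactor expansion run in reverse, precisely $\det$ of the matrix $A$ with \emph{every} column replaced by $v$... that is not quite it; more carefully, $\sum_i v_i \det[A^{(i)}] = \det[A + (v - 0)\,e\ldots]$ is cleanest to see as follows: it equals $w^\top \operatorname{adj}(A)^\top$-type contraction, namely $\sum_i v_i\,C_i$ where $C_i=\det[A^{(i)}]$ is the determinant obtained by replacing column $i$ of $A$ by $v$. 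Laplace expansion along the replaced column shows $C_i = \sum_k v_k (\operatorname{adj} A)_{ik}$, so $\sum_i v_i C_i = \sum_{i,k} v_i v_k (\operatorname{adj} A)_{ik} = v^\top \operatorname{adj}(A)\, v$ (up to transpose conventions). Since $A$ is skew-symmetric and $d$ is even, $\operatorname{adj}(A)$ is symmetric, so this is a genuine quadratic form; but one needs it to vanish. The clean route instead: $\operatorname{adj}(A)^\top = \operatorname{adj}(A^\top) = \operatorname{adj}(-A) = (-1)^{d-1}\operatorname{adj}(A) = -\operatorname{adj}(A)$ for $d$ even, so $\operatorname{adj}(A)$ is \emph{skew}-symmetric, whence $v^\top \operatorname{adj}(A)\,v = 0$. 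This gives $\sum_i\det[R_i]=0$ and the even case follows.

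For the odd case I would show $\det[A]=0$: over the fraction field of the polynomial ring in the entries of a generic skew-symmetric matrix (and hence over any commutative $\R$-algebra $\mathcal A$, by specialization), $\det[A]=\det[-A^\top]=(-1)^d\det[A]=-\det[A]$ forces $2\det[A]=0$, and since we work over $\R$-algebras (no $2$-torsion) we get $\det[A]=0$. Combined with the multilinearity expansion above, this yields $\det[A+\lambda vv^\top]=\sum_{i=1}^d\det[R_i]$, as stated.

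The main obstacle is the even case: the bookkeeping to identify $\sum_i \det[R_i]$ with the quadratic form $v^\top\operatorname{adj}(A)v$ and then exploit skew-symmetry of $\operatorname{adj}(A)$ requires care with transpose and sign conventions in the Laplace expansion; working over a general commutative algebra $\mathcal A$ (rather than a field) means I should phrase everything as a polynomial identity in the generic entries and invoke universal specialization, rather than dividing by anything or using invertibility of $A$.
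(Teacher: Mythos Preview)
Your proof is correct. Both you and the paper begin the same way: expand $\det[A+\lambda vv^\top]$ by multilinearity (you in the columns, the paper in both rows and columns), observe that any term with two or more replaced columns/rows vanishes because the corresponding columns/rows are proportional to $v$, and handle the odd case by noting $\det[A]=(-1)^d\det[A]=-\det[A]$.

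The genuine difference is in the even case. The paper avoids the adjugate entirely: it writes down \emph{both} the row expansion $\det[A+V]=\det[A]+\sum_i\det(R_{\{i\}})$ and the column expansion $\det[A+V]=\det[A]+\sum_i\det(C_{\{i\}})$, averages them, and then observes that $\det(R_{\{i\}})=-\det(C_{\{i\}})$ for $d$ even (this follows by transposing $R_{\{i\}}$, which turns $A$ into $-A$ and sends the replaced $i$th row to a replaced $i$th column, picking up $(-1)^{d-1}$ from negating the remaining $d-1$ columns). Your route instead identifies $\sum_i\det[R_i]=\lambda\, v^\top\operatorname{adj}(A)\,v$ via Laplace expansion and then uses $\operatorname{adj}(A)^\top=\operatorname{adj}(-A)=(-1)^{d-1}\operatorname{adj}(A)$ to conclude $\operatorname{adj}(A)$ is skew-symmetric for even $d$, whence the quadratic form vanishes. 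The two arguments rest on the same parity/transpose phenomenon, but the paper's dual-expansion trick is slightly slicker in that it never unpacks cofactors; your adjugate formulation, on the other hand, makes the mechanism (skew-symmetry of $\operatorname{adj}(A)$) more explicit and reusable. Your remark about phrasing everything as a polynomial identity over a generic skew-symmetric matrix is a good hygiene point that the paper leaves implicit.
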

\begin{proof}
  Let $V:=\lambda v v^\top$.
  For $I\subset [n] := \{1,\dots,n\}$,
  let $R_I$ be the matrix $A$ with the rows corresponding to $I$ replaced by the corresponding rows of $V$.
  Correspondingly, let $C_I$ be the matrix $A$ with the columns corresponding to $I$ replaced by the corresponding columns of $V$.
  Then
  \begin{align*}
    \det[ A + V ] &= \sum_{I \subset [n]} \det( R_I ) = \det( A ) + \sum_{i=1}^n \det( R_{\{i\}} ) \\
    \det[ A + V ] &= \sum_{I \subset [n]} \det( C_I ) = \det( A ) + \sum_{i=1}^n \det( C_{\{i\}} ),
  \end{align*}
  where we used the fact that $\det(R_I)=\det(C_I)=0$ if $|I|\ge 2$.
  
  If $d$ is odd, $\det[A] = 0$, since $A$ is anti-symmetric.
  This yields the statement in this case, with ($R_i := R_{\{i\}}$).
  
  If $d$ is even, we add the two expansions to get
  $$\det[ A + V ]
    = \det[ A ] + \frac{1}{2} \sum_{i=1}^n \left( \det( R_{\{i\}} ) + \det( C_{\{i\}} ) \right),$$
    and we get $\det( R_{\{i\}} ) = - \det( C_{\{i\}} )$, for all $i \in [n]$.
  This finishes the proof.
\end{proof}

For the following statement recall the matrix $\Wd$ from~\eqref{eq:Wd} with entries in the shuffle algebra $T(\R^d)$.
\begin{lemma}\label{lem:symmatrix}
 Write $\operatorname{Sym}[\Wd]4:= \frac{1}{2} (\Wd + \Wd^\top)$.
    Then we have
    \begin{align*} \operatorname{Sym}[\Wd]
    &=
    \frac{1}{2}\begin{pmatrix}
      \w1 \\
      \w2 \\
      \vdots \\
      \w{d}
    \end{pmatrix}\shuffle
    \begin{pmatrix}
      \w1 \\
      \w2 \\
      \vdots \\
      \w{d}
    \end{pmatrix}^\top.
  \end{align*}
\end{lemma}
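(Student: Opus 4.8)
The assertion is a direct entrywise verification, so the plan is simply to compute both sides of the claimed identity coordinate by coordinate and compare.

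First I would unwind the left-hand side. By~\eqref{eq:Wd} the $(i,j)$-entry of $\Wd$ is the concatenation word $\w{ij}=\w i\cdot\w j$, hence the $(i,j)$-entry of $\Wd^\top$ is $\w{ji}=\w j\cdot\w i$. Therefore the $(i,j)$-entry of $\operatorname{Sym}[\Wd]=\frac12(\Wd+\Wd^\top)$ is $\frac12\big(\w{ij}+\w{ji}\big)$.

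Next I would unwind the right-hand side. Writing $v=(\w1,\dots,\w{d})^\top$, the outer product $v\shuffle v^\top$ is formed using the (commutative) shuffle product of the scalar algebra $T(\R^d)$, so its $(i,j)$-entry is $\w i\shuffle\w j$. It remains to evaluate the shuffle of two single letters, which follows from Definition~\ref{def:shuffleRecursuve} applied with $\w v=\w w=\emptyWord$, $\w a=\w i$, $\w b=\w j$:
\begin{align*}
  \w i\shuffle\w j
  &= (\emptyWord\cdot\w i)\shuffle(\emptyWord\cdot\w j) \\
  &= \big(\emptyWord\shuffle(\emptyWord\cdot\w j)\big)\cdot\w i + \big((\emptyWord\cdot\w i)\shuffle\emptyWord\big)\cdot\w j \\
  &= \w j\cdot\w i + \w i\cdot\w j = \w{ij}+\w{ji}.
\end{align*}
Thus the $(i,j)$-entry of $\frac12\,v\shuffle v^\top$ equals $\frac12\big(\w{ij}+\w{ji}\big)$, which matches the $(i,j)$-entry of $\operatorname{Sym}[\Wd]$ computed above. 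Since $i$ and $j$ were arbitrary in $\{1,\dots,d\}$, the two matrices agree, which proves the lemma.

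There is no genuine obstacle here: the statement merely records in matrix form the elementary identity $\w i\shuffle\w j=\w{ij}+\w{ji}$ together with the definition of $\Wd$. The only points requiring a little care are the bookkeeping of the transpose in $\operatorname{Sym}[\Wd]$ and the convention that the entries of the outer product $v\shuffle v^\top$ are formed with the shuffle product rather than concatenation.
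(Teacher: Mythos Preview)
Your proof is correct and follows essentially the same approach as the paper: both compute the $(i,j)$-entry of each side and reduce to the identity $\w i\shuffle\w j=\w{ij}+\w{ji}$. The paper writes out the full $d\times d$ matrix explicitly while you argue coordinate by coordinate, but the content is identical.
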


\begin{proof}
  We have the following computation
  \begin{align*}
  \begin{pmatrix}
      \w1 \\
      \w2 \\
      \vdots \\
      \w{d}
    \end{pmatrix}\shuffle
    \begin{pmatrix}
      \w1 \\
      \w2 \\
      \vdots \\
      \w{d}
    \end{pmatrix}^\top
    &=
    \begin{pmatrix}
    \w1\shuffle \w1 & \w1\shuffle \w2 & \ldots & \w1\shuffle \w{d}\\
    \vdots &\vdots &\ddots &\vdots  \\
    \w{d}\shuffle \w1 & \w{d}\shuffle \w2 & \ldots &\w{d}\shuffle \w{d}\\
          \end{pmatrix}
          =
           \begin{pmatrix}
    \w1\w1+\w1\w1 & \w1\w2+\w2\w1 & \ldots & \w1\w{d}+\w{d}\w1\\
         \vdots &\vdots &\ddots &\vdots  \\
    \w{d}\w1+\w1\w{d} & \w{d}\w2+\w2\w{d} & \ldots &\w{d}\w{d}+\w{d}\w{d}\\
          \end{pmatrix} =\Wd + \Wd^\top.
      \end{align*}
      \end{proof}
The following definition is well-known (\cite[Chapter 5.2, p.82]{Bourbaki})
\begin{definition}
  Let $A=(a_{ij}) \in \mathcal A^{d\times d}$ be a anti-symmetric matrix, where $d$ is even.
  Then 
  \begin{align*}
    \Pf_{\mathcal A}[A]
    :=
    \frac{1}{2^{d/2}}\frac{1}{(d/2)!}\sum_{\pi\in\mathrm{S}_d}\mathrm{sign}(\pi)\prod_{i=1}^{d/2}a_{\pi(2i-1) \pi(2i)}
  \end{align*}
  is called the \emph{Pfaffian} of $A$.
\end{definition}

For the following statement see for instance~\cite[pages 140-141]{MR0082463},~\cite{MR1221053}, and references therein. 
\begin{lemma}
  \label{lem:pfaffian}
  Let $d$ be even and $A \in \mathcal A^{d\times d}$ a anti-symmetric matrix.
  We have the following identity:
  \begin{align*}
    {\det}_{\mathcal A}[ A ] = \left( \Pf_{\mathcal A}[A] \right)^2.
  \end{align*}  
\end{lemma}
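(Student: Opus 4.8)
The plan is to reduce the statement to the ``universal'' skew-symmetric matrix and then invoke the classification of alternating forms over a field of characteristic zero. Both $\det_{\mathcal A}$ and $\Pf_{\mathcal A}$ are given by fixed polynomial expressions in the matrix entries with integer coefficients, and these expressions are manifestly compatible with homomorphisms of commutative rings. Hence it suffices to prove the identity once and for all for the generic skew-symmetric matrix $\Xi=(\xi_{ij})_{i,j=1}^d$ with entries in the polynomial ring $R:=\Z[\xi_{ij}:1\le i<j\le d]$, where $\xi_{ji}:=-\xi_{ij}$ and $\xi_{ii}:=0$; the case of a general $A\in\mathcal A^{d\times d}$ then follows by applying the (unique) ring homomorphism $R\to\mathcal A$ determined by $\xi_{ij}\mapsto a_{ij}$ to the identity $\det(\Xi)=\Pf(\Xi)^2$.

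Since $R$ is an integral domain, it embeds in its fraction field $K:=\Q(\xi_{ij})$, so it is enough to verify the equality in $K$. The polynomial $\det(\Xi)$ is nonzero in $R$: specializing $\xi_{2i-1,2i}\mapsto 1$ and all remaining variables to $0$ turns $\Xi$ into the standard symplectic matrix $J:=\operatorname{diag}\!\left(\begin{pmatrix}0&1\\-1&0\end{pmatrix},\dots,\begin{pmatrix}0&1\\-1&0\end{pmatrix}\right)$ (with $d/2$ diagonal blocks), which has determinant $1$. Thus $\Xi$ defines a nondegenerate alternating bilinear form on $K^d$, and since $d$ is even the classification of such forms over a field of characteristic $\ne 2$ provides an invertible $P\in\GL_d(K)$ with $\Xi=P^{\top}JP$. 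Multiplicativity of the determinant now gives $\det(\Xi)=\det(P^\top)\det(J)\det(P)=\det(P)^2$, while the transformation law for the Pfaffian, $\Pf(P^{\top}JP)=\det(P)\,\Pf(J)$, together with $\Pf(J)=1$, gives $\Pf(\Xi)=\det(P)$. Squaring yields $\Pf(\Xi)^2=\det(P)^2=\det(\Xi)$ in $K$, hence in $R$, hence in $\mathcal A$ after specialization.

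The one input that is not mere bookkeeping is the Pfaffian transformation law $\Pf(B^{\top}AB)=\det(B)\,\Pf(A)$, and this is where I expect the real work (or the essential citation) to sit: one either cites it as classical, e.g. \cite[Chapter 5.2]{Bourbaki} or the references recalled just above, or one establishes it separately by writing $B$ over $K$ as a product of elementary matrices and checking the law factor by factor. An alternative route avoids it entirely and is fully self-contained: expand $\Pf(A)^2$ as a double sum over ordered pairs of perfect matchings of $\{1,\dots,d\}$ and $\det(A)$ as the Leibniz sum over $\mathrm S_d$, and construct a sign-preserving bijection between the two index sets via the cycle decomposition of the union of the two matchings. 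The only delicate point of that argument is verifying that the signs agree under this correspondence, which is precisely the routine-but-fiddly computation I would suppress at the planning stage.
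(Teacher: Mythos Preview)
Your proposal is correct: the reduction to the generic skew-symmetric matrix over $\Z[\xi_{ij}]$, passage to the fraction field, symplectic normalization $\Xi=P^\top J P$, and the Pfaffian transformation law together give a clean proof, and your alternative bijective argument via pairs of perfect matchings is also a well-known self-contained route.

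The paper, however, does not prove this lemma at all; it simply records it as classical and points to \cite[pages 140--141]{MR0082463} and \cite{MR1221053}. So there is nothing to compare at the level of argument: you supply a genuine proof where the paper supplies only a citation. If anything, your write-up is more informative than what appears in the paper, and the one place you flag as needing either a citation or a separate verification---the transformation law $\Pf(B^\top A B)=\det(B)\Pf(A)$---is exactly the kind of fact covered by the Bourbaki reference the paper already invokes in the definition of the Pfaffian.
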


\subsection{Deducing de Bruijn's formula from our results}
  The de Bruijn's formula has two different formulations depending on the parity of $d$. We compare both formulations with our results. 

  We recall that any matrix $M$ can be written as the sum of its symmetric part and its anti-symmetric part
$M = \operatorname{Sym}[M] + \operatorname{Anti}[M]$,
$\operatorname{Sym}[M] := \tfrac12 (M + M^\top)$ and  $\operatorname{Anti}[M] := \tfrac12 (M - M^\top)$.

  For $d$ even, de Bruijn's formula (\cite{bib:deBruijn}, see also~\cite[Remark 3.18]{bib:DR2018}),
  reads, in modern language, as follows.
  \begin{theorem*}[de Bruijn's formula - Even case]
    \label{thm:deBruijnEven}
    \begin{align*}
      \inv\left(
          \begin{gathered}
          \begin{ytableau}
          1  \\
          2 \\
          \vdots\\
           {d} 
          \end{ytableau}
          \end{gathered}
      \right)
      =
      2^{d/2}
      \Pf_\shuffle[ \operatorname{Anti}[\Wd ] ].
    \end{align*}
  \end{theorem*}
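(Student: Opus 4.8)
The plan is to combine the three ingredients already assembled: the main theorem ${\det}_\shuffle(\Wd) = \frac{1}{2^d}\inv(\tOned)^{\shuffle 2}$, the fact that $\Pf_\shuffle^2 = {\det}_\shuffle$ for skew-symmetric matrices over a commutative algebra (Lemma~\ref{lem:pfaffian}), and the decomposition $\Wd = \operatorname{Sym}[\Wd] + \operatorname{Anti}[\Wd]$ together with the rank-one description of the symmetric part from Lemma~\ref{lem:symmatrix}. The key observation is that, writing $v := (\w1,\dots,\w{d})^\top \in T(\R^d)^d$, Lemma~\ref{lem:symmatrix} says $\operatorname{Sym}[\Wd] = \tfrac12\, v \shuffle v^\top$, so that $\Wd = \operatorname{Anti}[\Wd] + \tfrac12\, v v^\top$ (products in the commutative shuffle algebra). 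Since $d$ is even, Lemma~\ref{lem:detAS} applied with $A = \operatorname{Anti}[\Wd]$, $\lambda = \tfrac12$ gives ${\det}_\shuffle(\Wd) = {\det}_\shuffle(\operatorname{Anti}[\Wd])$.

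First I would record this chain of equalities in the shuffle algebra: using Lemma~\ref{lem:pfaffian} on the skew-symmetric matrix $\operatorname{Anti}[\Wd]$,
\begin{align*}
  \bigl(\Pf_\shuffle[\operatorname{Anti}[\Wd]]\bigr)^{\shuffle 2}
  = {\det}_\shuffle(\operatorname{Anti}[\Wd])
  = {\det}_\shuffle(\Wd)
  = \frac{1}{2^d}\,\inv(\tOned)^{\shuffle 2}
  = \Bigl(\frac{1}{2^{d/2}}\,\inv(\tOned)\Bigr)^{\shuffle 2},
\end{align*}
where the third equality is Theorem~\ref{thm:main}. Thus $2^{d/2}\Pf_\shuffle[\operatorname{Anti}[\Wd]]$ and $\inv(\tOned)$ have equal shuffle-squares.

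Second, I would upgrade "equal squares" to "equal". The shuffle algebra $(T(\R^d),\shuffle)$ is a polynomial ring (Radford's theorem / freeness of the shuffle algebra on Lyndon words), hence an integral domain, so $a^{\shuffle 2} = b^{\shuffle 2}$ forces $a = \pm b$. To pin down the sign it suffices to compare the coefficient of a single word on both sides; the natural choice is $\word{12\cdots d}$. On the right, $\inv(\tOned) = \sum_{\sigma\in\mathrm S_d}\sign(\sigma)\,\sigma\word{12\cdots d}$ by definition, so the coefficient of $\word{12\cdots d}$ is $+1$. On the left one computes the coefficient of $\word{12\cdots d}$ in $2^{d/2}\Pf_\shuffle[\operatorname{Anti}[\Wd]]$ directly from the Pfaffian formula: the entries of $\operatorname{Anti}[\Wd]$ are $\tfrac12(\word{ij} - \word{ji})$, and only the "sorted" pairings contribute a word with all indices in increasing order, yielding coefficient $+1$ as well. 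Hence the two elements agree, which is exactly the claimed identity.

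The main obstacle is the sign/normalization bookkeeping in the second step: one must verify carefully that $\Pf_\shuffle[\operatorname{Anti}[\Wd]]$ (with its $\tfrac12$-scaled skew-symmetric entries and the $2^{-d/2}(d/2)!^{-1}$ prefactor in the definition of $\Pf$) really produces coefficient $+2^{-d/2}$ on $\word{12\cdots d}$ so that the $2^{d/2}$ prefactor restores it to $+1$ — this is where an off-by-a-power-of-two or an off-by-a-sign error would hide. Everything else is a formal consequence of results already in the paper: Lemma~\ref{lem:symmatrix} for the rank-one reduction, Lemma~\ref{lem:detAS} (even case) to kill the symmetric perturbation, Lemma~\ref{lem:pfaffian} for $\det = \Pf^2$, and Theorem~\ref{thm:main} for the value of ${\det}_\shuffle(\Wd)$. (One should also note explicitly that $\Pf_\shuffle$ of a skew-symmetric matrix over the shuffle algebra is legitimate precisely because $(T(\R^d),\shuffle)$ is commutative, which is stated in Section~\ref{sec:prelim}.)
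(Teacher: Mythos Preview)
Your proposal is correct and follows essentially the same route as the paper: use Lemma~\ref{lem:symmatrix} to write $\Wd = \operatorname{Anti}[\Wd] + \tfrac12 v v^\top$, apply Lemma~\ref{lem:detAS} (even case) to obtain ${\det}_\shuffle(\Wd) = {\det}_\shuffle(\operatorname{Anti}[\Wd])$, invoke Lemma~\ref{lem:pfaffian} for $\det = \Pf^2$, feed in Theorem~\ref{thm:main}, and then pass from equal squares to equality up to sign using integrality of the shuffle algebra, fixing the sign by comparing the coefficient of $\word{12\cdots d}$. The only differences are expository: you make the rank-one reduction explicit (the paper invokes Lemma~\ref{lem:detAS} directly), and you justify integrality via Radford's theorem whereas the paper simply asserts that the shuffle algebra is commutative and integral.
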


  \begin{proof}
  We deduce this from our results in the main text.
  First, Theorem~\ref{thm:main} gives
  \begin{align*}
    \inv\left(
        \begin{gathered}
        \begin{ytableau}
        1  \\
        2 \\
        \vdots\\
         {d} 
        \end{ytableau}
        \end{gathered}
    \right)^{\shuffle 2}
    =
    2^d {\det}_\shuffle[ \Wd ].
  \end{align*}

  Note that $\Wd = \operatorname{Sym}(\Wd) + \operatorname{Anti}(\Wd)$. Then, by Lemma~\ref{lem:symmatrix},
  ${\det}_\shuffle[ \Wd ]=
    {\det}_\shuffle[ \operatorname{Anti}[\Wd] ]$.
  By properties of the Pfaffian, Lemma~\ref{lem:pfaffian},
  ${\det}_\shuffle[ \operatorname{Anti}[\Wd] ] =
    {\Pf}_\shuffle[ \operatorname{Anti}[\Wd] ]^{\shuffle 2}$.
  Hence
  \begin{align*}
    \inv\left(
        \begin{gathered}
        \begin{ytableau}
        1  \\
        2 \\
        \vdots\\
         {d} 
        \end{ytableau}
        \end{gathered}
    \right)^{\shuffle 2}
    = 
   2^d {\Pf}_\shuffle[ \operatorname{Anti}[\Wd] ]^{\shuffle 2}.
  \end{align*}
  Since the shuffle algebra is commutative and integral, we deduce that: 
  \begin{align*}
    \inv\left(
        \begin{gathered}
        \begin{ytableau}
        1  \\
        2 \\
        \vdots\\
         {d} 
        \end{ytableau}
        \end{gathered}
    \right)
    =
    \pm 2^{d/2} {\Pf}_\shuffle[ \operatorname{Anti}[\Wd] ].
  \end{align*}

  By comparing the coefficient of the word $\w{123\dots d}$, we deduce that the sign must be positive, and
  have thus shown de Bruijn's formula in the even case. 
  \end{proof}
  
  \begin{example}
de Bruijn's formula in the case $d=4$ gives
\begin{align*}
\inv\left(
        \begin{gathered}
        \begin{ytableau}
        1  \\
        2 \\
        {3} \\
        {4}
        \end{ytableau}
        \end{gathered}
    \right)
  =
  4\operatorname{Pf}_\shuffle
  \begin{pmatrix}
    0 & \w{12-21} & \w{13-31}         & \w{14-41} \\
    \w{21-12} & 0 & \w{23-32}         & \w{24-42} \\
    \w{31-13} & \w{32-23} & 0         & \w{34-43} \\
    \w{41-14} & \w{42-24} & \w{43-34} &         0
  \end{pmatrix},
\end{align*}
whereas Theorem~\ref{thm:main} gives
\begin{align*}
  \inv\left(
        \begin{gathered}
        \begin{ytableau}
        1  \\
        2 \\
        {3} \\
        {4}
        \end{ytableau}
        \end{gathered}
     \right)^{\shuffle 2}
  =
  16\
  {\det}_\shuffle 
  \begin{pmatrix}
    \w{11} & \w{12} & \w{13} & \w{14}\\
    \w{21} & \w{22} & \w{23} & \w{24}\\
    \w{31} & \w{32} & \w{33} & \w{34} \\
    \w{41} & \w{42} & \w{43} & \w{44}
  \end{pmatrix}.
\end{align*}

\end{example}

Now, we look at the odd case. 
  \begin{theorem*}[de Bruijn's formula - Odd case]
    \label{thm:deBruijnOdd}
For $d$ odd, de Bruijn's formula (\cite{bib:deBruijn}) reads as
  \begin{align*}
    \inv\left(
        \begin{gathered}
        \begin{ytableau}
        1  \\
        2 \\
        \vdots\\
        {d} 
        \end{ytableau}
        \end{gathered}
    \right)
    =
    \Pf_\shuffle[ Z_d ],
  \end{align*}
  where $Z_d$ denotes the matrix of the form
  \begin{align*}
    Z_d
    =
    \left( \begin{array}{c|c} 2\operatorname{Anti}[\Wd] & \begin{array}{c} \w1 \\ \w2 \\ \vdots \\ \w{d} \end{array} \\ \hline \begin{array}{cccc} - \w1 & -\w2 & \cdots & -\w{d} \end{array} & 0 \end{array} \right).
  \end{align*}
  \end{theorem*}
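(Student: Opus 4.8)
The plan is to mimic the structure of the even-case proof, reducing the odd-$d$ statement to the main results already established. Starting from Theorem~\ref{thm:main}, we have $\inv(\tOned)^{\shuffle 2} = 2^d\, {\det}_\shuffle[\Wd]$. Writing $\Wd = \operatorname{Sym}[\Wd] + \operatorname{Anti}[\Wd]$ and using Lemma~\ref{lem:symmatrix} to identify $\operatorname{Sym}[\Wd] = \tfrac12 v \shuffle v^\top$ with $v = (\w1,\dots,\w{d})^\top$, we are exactly in the setting of Lemma~\ref{lem:detAS} with $A = \operatorname{Anti}[\Wd]$, $\lambda = \tfrac12$: thus ${\det}_\shuffle[\Wd] = \sum_{i=1}^d {\det}_\shuffle[R_i]$, where $R_i$ is $\operatorname{Anti}[\Wd]$ with its $i$-th row replaced by the $i$-th row of $\tfrac12 v v^\top$.

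The next step is to recognize that $\sum_i {\det}_\shuffle[R_i]$ is precisely the Pfaffian expansion of the bordered matrix $Z_d$. Recall the classical cofactor expansion of a Pfaffian along its last row/column: for a $(d{+}1)\times(d{+}1)$ skew-symmetric matrix of the bordered shape $\left(\begin{smallmatrix} B & u \\ -u^\top & 0\end{smallmatrix}\right)$ with $d$ even (so $d{+}1$ odd — wait, $Z_d$ has size $(d{+}1)$ with $d$ odd, hence $d{+}1$ even, good), one has $\Pf[Z] = \sum_{i=1}^{d} (-1)^{i+1} u_i \,\Pf[\widehat B_i]$, where $\widehat B_i$ is $B$ with row and column $i$ removed. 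I would instead use the equivalent, more convenient form ${\det}[Z_d] = (\Pf[Z_d])^2$ together with the Schur-complement-type expansion $\det\!\left(\begin{smallmatrix} B & u \\ -u^\top & 0\end{smallmatrix}\right) = u^\top \operatorname{adj}(B)\, u$ when $B$ is skew-symmetric of odd size (so $\det B = 0$), and match this against $2^d {\det}_\shuffle[\Wd]$. Concretely, with $B = 2\operatorname{Anti}[\Wd]$ and $u = v$, one checks that $v^\top \operatorname{adj}_\shuffle(2\operatorname{Anti}[\Wd])\, v$, after accounting for the factor $2^d$ distributed as $2^{d-1}$ in $\operatorname{adj}$ of a $(d{-}1)$-sized-minor structure plus one more factor of $2$, equals $2^d \sum_i {\det}_\shuffle[R_i] = 2^d {\det}_\shuffle[\Wd] = \inv(\tOned)^{\shuffle 2}$. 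Hence ${\det}_\shuffle[Z_d] = \inv(\tOned)^{\shuffle 2}$, and since $Z_d$ is skew-symmetric of even size, Lemma~\ref{lem:pfaffian} gives ${\det}_\shuffle[Z_d] = \Pf_\shuffle[Z_d]^{\shuffle 2}$, so $\Pf_\shuffle[Z_d] = \pm \inv(\tOned)$ by integrality of the shuffle algebra. As in the even case, comparing the coefficient of the word $\w{12\cdots d}$ on both sides (it appears with coefficient $1$ in $\inv(\tOned)$, and one tracks its appearance in the Pfaffian expansion of $Z_d$ via the border entries $\w1,\dots,\w{d}$) pins down the sign as $+$.

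The main obstacle I expect is the bookkeeping of constants in the second step: the matrix $Z_d$ carries a factor $2$ on the $\operatorname{Anti}[\Wd]$ block but not on the border, so one must verify carefully that the Pfaffian cofactor expansion of $Z_d$ reproduces exactly $\sum_i {\det}_\shuffle[R_i]$ scaled by the right power of $2$ — i.e. that the $2$ on the block and the $\tfrac12$ coming from $\operatorname{Sym}[\Wd] = \tfrac12 v v^\top$ conspire to give the stated normalization with no stray factor. A clean way to sidestep sign-tracking in the cofactor expansion is to instead expand $\det_\shuffle[Z_d]$ directly by the Leibniz formula along the last row and column simultaneously (each contributing one border entry $\pm\w{k}$, $\pm\w{\ell}$, and the remaining permutation acting on a size-$(d{-}1)$ submatrix of $2\operatorname{Anti}[\Wd]$), and compare term-by-term with the Leibniz expansion of $2^d\det_\shuffle[R_i]$; the skew-symmetry of $\operatorname{Anti}[\Wd]$ forces the "diagonal" contributions to vanish and pairs up the rest correctly. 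Everything else is a direct translation of the even-case argument.
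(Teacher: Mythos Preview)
Your proposal is correct and follows essentially the same route as the paper: start from Theorem~\ref{thm:main}, split $\Wd$ into its symmetric and antisymmetric parts via Lemma~\ref{lem:symmatrix}, apply the odd case of Lemma~\ref{lem:detAS} to get ${\det}_\shuffle[\Wd]=\sum_i{\det}_\shuffle[R_i]$, identify this sum with $2^{-d}{\det}_\shuffle[Z_d]$, then invoke Lemma~\ref{lem:pfaffian} and integrality of the shuffle algebra together with the coefficient of $\w{12\cdots d}$ to fix the sign. The only difference is that the paper simply asserts the identity $\sum_i{\det}_\shuffle[R_i]=2^{-d}{\det}_\shuffle[Z_d]$ in one line, whereas you sketch two concrete ways (adjugate/Schur complement, or a direct Leibniz expansion along the bordering row and column) to verify it and correctly flag the power-of-$2$ bookkeeping as the place to be careful.
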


  \begin{proof}
We follow a similar strategy as in the even-dimensional case,
to show this theorem using our results.
As before, we decompose into symmetric and anti-symmetric part,
  $\Wd = \operatorname{Sym}[\Wd] + \operatorname{Anti}[\Wd]$.
By Lemma~\ref{lem:symmatrix}, $\operatorname{Sym}[\Wd]= \frac{1}{2}\w{v}\w{v^\top}$ for $\w{v}^\top = \left[\begin{array}{cccc} \w1 & \w2 & \cdots & \w{d} \end{array}\right]$.
Then by Lemma~\ref{lem:detAS},
${\det}_\shuffle[\Wd]  =
  \sum_{i=1}^d {\det}_\shuffle\left[ R_i \right]$,
where $R_i$ is the matrix $\operatorname{Anti}[\Wd]$ with $i^{\text{th}}$ row replaced by $\frac12 v_i v^\top$.

Now
\begin{align*}
  \sum_{i=1}^d {\det}_\shuffle\left[ R_i \right]
  =
  2^{-d}
  {\det}_\shuffle\left( \begin{array}{c|c} 2\operatorname{Anti}[\Wd] & \begin{array}{c} \w1 \\ \w2 \\ \vdots \\ \w{d} \end{array} \\ \hline \begin{array}{cccc} - \w1 & -\w2 & \cdots & -\w{d} \end{array} & 0 \end{array} \right)
  =
  2^{-d}  {\det}_\shuffle (Z_d),
\end{align*}
which gives  $2^d {\det}_\shuffle[\Wd] = {\det}_\shuffle (Z_d)$.
Combining with our Theorem~\ref{thm:main}, we get
  \begin{align*}
    \inv\left(
        \begin{gathered}
        \begin{ytableau}
        1  \\
        2 \\
        \vdots\\
        {d} 
        \end{ytableau}
        \end{gathered}
    \right)^{\shuffle 2}
    =
    2^d {\det}_\shuffle[ \Wd ] = {\det}_\shuffle (Z_d).
  \end{align*}
  By Lemma~\ref{lem:pfaffian} (note that $Z_d$ is even-dimensional), 
${\det}_\shuffle[ Z_d] =
    {\Pf}_\shuffle[ Z_d ]^{\shuffle 2}$.
  Hence
  \begin{align*}
    \inv\left(
        \begin{gathered}
        \begin{ytableau}
        1  \\
        2 \\
        \vdots\\
        {d} 
        \end{ytableau}
        \end{gathered}
    \right)^{\shuffle 2}
    = {\Pf}_\shuffle[ Z_d  ]^{\shuffle 2}.
  \end{align*}
  As for the even case, since the shuffle algebra is commutative and integral, and by comparing the coefficient of the word $\w{123\dots d}$,
  we deduce de Bruijn's formula in the odd case.
    
  \end{proof}
  
\begin{example}
de Bruijn's formula in the case $d=3$ gives
\begin{align*}
 \inv\left(
        \begin{gathered}
        \begin{ytableau}
        1  \\
        2 \\
        {3} 
        \end{ytableau}
        \end{gathered}
    \right)
  =
  \operatorname{Pf}_\shuffle
  \begin{pmatrix}
    0 & \w{12-21} & \w{13-31}   & \w1 \\
    \w{21-12} & 0 & \w{23-32}   & \w2 \\
    \w{31-13} & \w{32-23} & 0   & \w3 \\
         -\w1 &      -\w2 &-\w3 & 0
       \end{pmatrix} 
\end{align*}
whereas Theorem~\ref{thm:main} gives
\begin{align*}
 \inv\left(
        \begin{gathered}
        \begin{ytableau}
        1  \\
        2 \\
        {3} 
        \end{ytableau}
        \end{gathered}
    \right)^{\shuffle 2}
  &=
  8\
  {\det}_\shuffle 
  \begin{pmatrix}
    \w{11} & \w{12} & \w{13} \\
    \w{21} & \w{22} & \w{23} \\
    \w{31} & \w{32} & \w{33}
  \end{pmatrix},
\end{align*}
which, by the preceding argument, is equal to
\begin{align*}
  {\det}_\shuffle
  \begin{pmatrix}
    0 & \w{12-21} & \w{13-31}   & \w1 \\
    \w{21-12} & 0 & \w{23-32}   & \w2 \\
    \w{31-13} & \w{32-23} & 0   & \w3 \\
         -\w1 &      -\w2 &-\w3 & 0
  \end{pmatrix}.
\end{align*}

\end{example}

 \bibliographystyle{alpha}
 \bibliography{bibShuffles}
 
\end{document}